\documentclass[12pt,a4paper,reqno,draft]{amsart}
\usepackage{amssymb,amsfonts}
\usepackage[utf8]{inputenc}
\usepackage[english]{babel}
\usepackage{color}
\usepackage{enumerate}

\advance\textwidth30mm \advance\hoffset-12mm
\advance\textheight20mm \advance\voffset-12mm
\newtheorem{thm}{Theorem}
\newtheorem{lem}{Lemma}

\theoremstyle{remark}
\newtheorem{rem}{Remark}
\theoremstyle{definition}

\newcommand\sign{\operatorname{sign}}
\renewcommand\Re{\operatorname{Re}}
\renewcommand\Im{\operatorname{Im}}
\newcommand\supp{\operatorname{supp}}

\newcommand\Cdot{{\mskip2mu\cdot\mskip2mu}}
\newcommand\PW{\mathit{PW}}
\newcommand\res{\operatorname*{res}}

\begin{document}
%UDC 517.5

\title{The Nikolskii constant in odd dimensions}
\author{D.\,V.~Gorbachev}
\address{Saint Petersburg State University}
\email{dvgmail@mail.ru}
\thanks{The study was carried out with the financial support of the Ministry of Science
and Higher Education of the Russian Federation in the framework of a scientific
project under agreement No.~075-15-2025-013.}
%The study was carried out with the financial support of the Ministry of Science
%and Higher Education of the Russian Federation in the framework of a scientific
%project under agreement No.~075-15-2025-013.
\date{}

\begin{abstract}
Let $\varphi_{d}(|\Cdot|)$ be the radial extremal function in the problem for
the sharp Nikolskii constant $\mathcal C_d^{-1}=\inf \|f\|_{1}$ over functions
$f\in\PW_{1}^{1}(\mathbb R^{d})$, $f(0)=1$. For every odd dimension,
we construct an entire function $\Phi$ of exponential type $1/2$ such that
$\varphi_d(z)=\Phi(z)\Phi(-z)$, and $\Phi$ satisfies a quadratic functional
equation and a second-order linear differential equation with polynomial
coefficients. Thus, the problem of finding the extremal function is reduced
to a spectral problem with finitely many parameters. This result extends a
recent one-dimensional result, but uses a different method. For example, in
dimension $d=3$ it leads to a seven-diagonal spectral scheme that allows us to
compute $\mathcal C_3$ to high accuracy. Even dimensions remain open within
this approach.

\end{abstract}

\keywords{Nikolskii constant, extremal function, Paley--Wiener space,
Hermite--Biehler function, quadratic functional equation, spectral problem.}

\subjclass[2020]{41A17 (primary), 30D15, 34B09 (secondary)}

\maketitle

\section{Introduction}

\subsection{Main result}
Consider the extremal problem for the sharp Nikolskii constant
\begin{equation}\label{c-prob}
\mathcal C_d^{-1}=\inf\{\|f\|_{1}\colon f\in\PW_{1}^{1}(\mathbb
R^{d}),\ f(0)=1\},
\end{equation}
where $\PW_{r}^{p}(\mathbb R^{d})$ is the Paley--Wiener class of
functions of spherical exponential type at most~$r$ that belong to
$L^{p}(\mathbb R^{d})$.

It was proved in \cite{Da21} that problem~\eqref{c-prob} has a unique radial
extremal function $\varphi_{d}(|\Cdot|)\in\PW_{1}^{1}(\mathbb{R}^{d})$,
which can be written as
\[
\varphi_{d}(z)=\prod_{k=1}^{\infty} \Bigl(1-\frac{z^{2}}{\tau_{k}^{2}}\Bigr),\quad
z\in \mathbb{C},
\]
where $0<\tau_1<\tau_2<\ldots$ are the positive zeros of $\varphi_{d}$ and
$\tau_{k}\sim \pi k$ as $k\to \infty$ (see Section~\ref{sec-prop-phi}).

For $d=1$, it was shown in \cite{Bo25} that the extremal function factors as
$\varphi_{1}(z)=\Phi(z)\Phi(-z)$ and that $\Phi$ satisfies both a functional
equation and a linear differential equation. Below we obtain analogous
equations in all odd dimensions.

From now on, write $\varphi=\varphi_{d}$. As in the one-dimensional case, set
\begin{equation}\label{varphi-Phi}
\varphi(z)=\Phi(z)\Phi(-z),
\end{equation}
where
\begin{equation}\label{intro-Phi}
\Phi(z)=\prod_{k=1}^{\infty}\Bigl(1+(-1)^k\frac{z}{\tau_k}\Bigr).
\end{equation}
By Lemma \ref{lem-Phi}, $\Phi$ is an entire function of finite exponential
type.

Let $V_d$ be the volume of the unit ball $B_{1}^{d}\subset \mathbb{R}^{d}$ and
\begin{equation}\label{def-ad}
a_d=\frac{1}{2V_d\mathcal C_d}.
\end{equation}

\begin{thm}\label{thm-main}
Let $d=2m+1$, $m\in\mathbb Z_{\ge 0}$, $z\in\mathbb C$.

\textup{(a)} There exists an even polynomial $Q_d$ such that the quadratic
functional identity
\begin{equation}\label{intro-FE}
z^{d+1}\bigl(\Phi'(z)\Phi(-z)+\Phi'(-z)\Phi(z)\bigr)
-2R_d(z)\Phi(z)\Phi(-z)=-2a_d,
\end{equation}
holds, where $R_d(z)=a_d+z^2Q_d(z)$. Moreover, $Q_1=0$ and
$\deg Q_d\le d-3$ for $d\ge3$.

\textup{(b)} There exists an even polynomial $K_d$ such that $\Phi$ satisfies
the linear differential equation
\begin{equation}\label{intro-ODE}
z^{d+1}\Phi''(z)
+\bigl((d+1)z^d-2R_d(z)\bigr)\Phi'(z)
+\bigl(K_d(z)-R_d'(z)\bigr)\Phi(z)=0.
\end{equation}
The polynomial $K_d$ has degree $d+1$ and leading coefficient $1/4$.

\textup{(c)} The function $\Phi$ has exact exponential type $1/2$ and
\[
\Phi(x)=O\bigl(|x|^{-(d+1)/2}\bigr),\quad |x|\to\infty.
\]
\end{thm}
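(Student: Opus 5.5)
We work from the structure of the extremal function furnished by \cite{Da21} and Section~\ref{sec-prop-phi}. The key facts are that $\varphi$ is even, real, has simple real zeros $\pm\tau_k$, and satisfies $\varphi(x)=O(|x|^{-d-1})$. The main input is the duality characterization: $\sign\varphi(|x|)$ has a spectral gap, which in radial form means that the function $\varphi$ changing sign exactly at the $\tau_k$ satisfies, for every $f\in\PW^1_1$, $\int f(x)\sign\varphi(|x|)\,dx=\mathcal C_d^{-1}f(0)$. Writing this radially with the weight $|x|^{d-1}$ and using Lemma~\ref{lem-Phi} (exponential type of $\Phi$), we turn it into a residue statement about the meromorphic function $1/\varphi$.

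\textbf{Step 1 (part (a)).} Consider
\[
G(z)=\frac{z^{d+1}\varphi'(z)}{\varphi(z)}
\quad\text{versus}\quad
H(z)=\frac{z^{d+1}\bigl(\Phi'(z)\Phi(-z)+\Phi'(-z)\Phi(z)\bigr)+2a_d}{\varphi(z)}.
\]
We aim to show that $H$ is an even polynomial of degree at most $d+1$. Its poles are at $\pm\tau_k$. Since $\Phi(z)$ vanishes only at $(-1)^{k+1}\tau_k$, the residue of $H$ at $\tau_k$ reduces to $\pm\tau_k^{d+1}\Phi'\Phi(-\cdot)/\varphi'(\tau_k)+2a_d/\varphi'(\tau_k)$. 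The numerator combination $\Phi'(z)\Phi(-z)+\Phi'(-z)\Phi(z)$ equals $\pm\varphi'(\tau_k)$ at the zeros, with the alternating sign $(-1)^k$ dictated by the definition~\eqref{intro-Phi}. So we need
\[
\tau_k^{d+1}|\varphi'(\tau_k)|=2a_d\quad\text{for all }k,
\]
with matching signs. This is exactly the interpolation identity we extract from the duality condition: apply the orthogonality to $f(x)=\varphi(|x|)/(|x|^2-\tau_k^2)$ and compare consecutive $k$, or use the Gauss-type quadrature for radial $\PW^1$ functions with nodes $\tau_k$. This identity gives $1/(\tau_k^{d+1}\varphi'(\tau_k))=(-1)^k/(2a_d)$, and the constant is identified with~\eqref{def-ad} by evaluating at $f$ concentrated at $0$.

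\textbf{Step 2.} With the residues cancelling, $H$ is entire. A growth estimate via Phragmén--Lindelöf on rays, using the lower bound $|\varphi(iy)|\asymp e^{|y|}|y|^{-d-1}$ and type $1/2$ bounds for $\Phi(\pm z)$ away from zeros, gives $H=O(|z|^{d+1})$. Hence $H=2R_d$ is an even polynomial of degree at most $d+1$. Evaluating at $z=0$ gives $R_d(0)=a_d$. Comparing the growth on $i\mathbb R$, where $\Phi(iy)\Phi(-iy)$ dominates, forces the top coefficient to vanish, giving $\deg\le d-1$ and hence $\deg Q_d\le d-3$. For $d=1$ the degree bound gives $R_1=a_1$. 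Part~(c) then follows from~\eqref{intro-FE}. Along $i\mathbb R$, $\Phi'/\Phi\to\pm1/2$ forces type $1/2$ for each factor, by the indicator of $\Phi$ and the symmetry of the zeros. On $\mathbb R$, the decay of $\varphi$ splits evenly because of the interlacing sign pattern, giving $|\Phi(x)|^2\asymp|\varphi(x)|$ in averaged sense, which upgrades to $O(|x|^{-(d+1)/2})$ via the functional equation bounding $\Phi'$.

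\textbf{Step 3 (part (b)).} Define
\[
K=-\frac{z^{d+1}\Phi''+((d+1)z^d-2R_d)\Phi'-R_d'\Phi}{\Phi}.
\]
Differentiating~\eqref{intro-FE} shows that the same expression, computed for $\Phi(-z)$, equals $K(-z)$. It also shows that the Wronskian-type combination forces $K(z)=K(-z)$: subtracting the two and using the identity eliminates the cross terms. Poles of $K$ occur only at the zeros of $\Phi$. There, $\Phi(-\cdot)\ne0$ and~\eqref{intro-FE} gives $z^{d+1}\Phi'=-2a_d/\Phi(-z)$, so the evenness argument transfers regularity from $\Phi(-z)$, and $K$ is entire. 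Growth again gives a polynomial, and the asymptotics $\Phi''/\Phi\sim1/4$ along $i\mathbb R$ give degree $d+1$ with leading coefficient $1/4$.

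The main obstacle is Step~1: deriving the exact interpolation identity $\tau_k^{d+1}\varphi'(\tau_k)=2a_d(-1)^k$ from the extremality. This is where odd $d$ enters, since the radial weight $|x|^{d-1}=x^{d-1}$ is a polynomial, so the quadrature can be moved to the complex plane.
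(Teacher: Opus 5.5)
\textbf{Step~1 has a genuine gap, and it is the whole difficulty.} The identity $\tau_k^{d+1}\varphi'(\tau_k)=2(-1)^ka_d$ cannot be obtained from the test functions you suggest. Inserting $g(x)=\varphi(x)/(x^2-\tau_k^2)$ into \eqref{extremality-1D} gives only
\[
\int_{\mathbb R}\frac{|\varphi(x)|\,|x|^{d-1}}{x^2-\tau_k^2}\,dx=-\frac{2}{\mathcal C_d\,\omega_{d-1}\,\tau_k^2}.
\]
This is a statement about a Cauchy-type transform of the positive density $|\varphi||x|^{d-1}$ at the nodes. The quantity $\varphi'(\tau_k)$ does not appear, and subtracting the identities for consecutive $k$ only produces more integrals of the same kind. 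There is also no Gauss-type quadrature with nodes $\tau_k$ to invoke. The extremality condition is a one-node formula $g\mapsto g(0)$ for the signed measure $\sigma(x)|x|^{d-1}dx$, not a quadrature for $|x|^{d-1}dx$.

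Your identity says exactly that $1/\varphi$ has residues $(-1)^k\tau_k^{d+1}/(2a_d)$. Together with the growth control you would also need, it is equivalent to \eqref{intro-FE} itself, so Step~1 presupposes the conclusion.

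What is missing is the mechanism by which odd $d$ enters.
\begin{itemize}
\item Since $\widehat{x^{2m}\sigma}=(-1)^m\partial_t^{2m}\widehat\sigma$ is constant on $(-1,1)$, $2\widehat\sigma=P_d$ is an even polynomial there.
\item The interlacing (Lemma~\ref{lem-HB}) produces the analytic signature $S=\frac{2}{\pi i}\log\frac{i\Phi(z)}{\Phi(-z)}$ with $\Re S=\sigma$, so $\widehat S=P_d$ on $(0,1)$.
\item With $A=\frac{\pi i}{4}S'$, the function $\Theta_d=z^dA-zQ_d-a_d/z$ has a spectral gap. Hence $\varphi\Theta_d+a_d/z$ has spectrum $\{0\}$ and is a polynomial $z\Pi_d$.
\item Lemma~\ref{lem-PP}, applied to $z^{d-1}\varphi$ at the zeros, forces $\Pi_d\equiv0$.
\end{itemize}
Your interpolation identity is a by-product, read off from \eqref{Pi-at-zero}. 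Likewise, $\deg Q_d\le d-3$ comes from $\deg P_d\le d-1$, not from a growth comparison.

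\textbf{The remaining steps use facts not available at that stage.} Step~2 and your part~(c) use $\varphi(x)=O(|x|^{-d-1})$, a lower bound $|\varphi(iy)|\gtrsim e^{|y|}|y|^{-d-1}$, and type $1/2$ of $\Phi$. None of these is established in Section~\ref{sec-prop-phi}, which gives only $\tau_k\sim\pi k$ and $|x|^{d-1}\varphi\in L^1$. The decay of $\varphi$ and the type of $\Phi$ are consequences of part~(c) itself, so the argument is circular. Even granting them, Phragm\'en--Lindel\"of would need bounds for $H$ on $\mathbb R$, and you have no lower bound for $|\varphi|$ between its zeros. The paper avoids this by obtaining polynomiality from Fourier support.

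\textbf{Part (b).} Your evenness conclusion is correct: differentiating \eqref{intro-FE} gives $\mathcal P_d(-z)-\mathcal P_d(z)=2R_d'(z)$, so $K$ is even and cannot have poles. Polynomiality then follows from a logarithmic-derivative estimate as in Lemma~\ref{lem-logder}. However, getting degree $d+1$ and leading coefficient $1/4$ from ``$\Phi''/\Phi\sim1/4$ on $i\mathbb R$'' (the limit would actually be $-1/4$) needs sharp asymptotics of $\Phi$ off the real axis that you have not proved. The paper instead uses the Liouville form $u''+Vu=0$ with $V=K_d/x^{d+1}+O(x^{-2})$, and Sturm-type zero counting (Lemma~\ref{lem-zero-count}) that needs only the zero density $1/(2\pi)$ of $\Phi$.

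\textbf{Part (c).} ``The decay of $\varphi$ splits evenly'' is not an argument for $\Phi(x)=O(|x|^{-(d+1)/2})$. The paper obtains this from the energy estimate \eqref{energy} once $V=1/4+O(x^{-2})$ is known. It obtains the exact type from the Fourier-transformed ODE, whose top coefficient $1/4-t^2$ forces $\supp\widehat\Phi\subset[-1/2,1/2]$.
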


The polynomials $Q_d$ and $K_d$ are determined by the extremal function
$\varphi$. Set
\[
\sigma(x)=\sign\varphi(x),\quad x\in\mathbb R.
\]
More precisely, $Q_d$ is given by \eqref{def-Qd}, where $p_n$ are the
coefficients of the even polynomial~$P_d$ of degree at most $d-1$ defined by
\begin{equation}\label{sigma-P}
\widehat\sigma(t)=\frac12\,P_d(t),\quad t\in(-1,1).
\end{equation}
This identity follows from the extremality condition \eqref{extremality-1D}
and the oddness of the dimension $d=2m+1$: since $|x|^{d-1}=x^{2m}$, for the
one-dimensional Fourier transform, in the sense of distributions,
\[
\widehat{x^{2m}\sigma}
=(-1)^m\partial_t^{2m}\widehat\sigma.
\]
The polynomial $K_d$ is defined by \eqref{def-Kd}, while formula
\eqref{Kd-formula} expresses it through the coefficients of $R_d$ and a finite
number of coefficients of the logarithmic derivative $\Phi'/\Phi$.

Thus, the coefficients of $Q_d$ and $K_d$ are internal parameters of the
extremal problem and are determined together with $\Phi$. As a result, the
original nonlinear extremal problem is reduced to a spectral problem for a
second-order linear differential equation with polynomial coefficients. In particular, in dimension $d=3$ it leads to a seven-diagonal spectral scheme that allows us to compute $\mathcal C_3$ and the coefficients of the differential equation for $\Phi$ with high accuracy.

\subsection{Outline of the proof}
To prove \textup{(a)}, the interlacing of the zeros of $\Phi(z)$ and
$\Phi(-z)$ allows us to construct the analytic function $S$,
\[
S(z)=\frac{4}{\pi}\arctan B(z),\quad
B(z)=\frac{\Phi(z)+i\Phi(-z)}{\Phi(-z)+i\Phi(z)},
\]
where $B$ is a meromorphic inner function in the upper half-plane
$\Im z>0$. Moreover, $\Re S=\sigma$ on $\mathbb{R}$ away from the zeros of $\varphi$. The
form of $S$ was suggested by \cite{Ge38}. The function $S$ has one-sided
spectrum, and $\widehat S=2\widehat\sigma$ on $(0,\infty)$. Hence,
\eqref{sigma-P} gives $\widehat S=P_d$ on $(0,1)$. The coefficients of $P_d$
determine the polynomial $Q_d$, which allows us to construct from
$A(z)=(\pi i/4)S'(z)$ the meromorphic function
\[
\Theta_d(z)=z^dA(z)-zQ_d(z)-\frac{a_d}{z}
\]
with a spectral gap. We then prove that the defect
$H_d=\varphi\Theta_d+a_d/z$ is entire and has Fourier transform supported at a
single point, and hence is a polynomial. Evaluating~$H_d$ at the zeros of
$\varphi$ and applying the Plancherel--Polya inequality, we obtain
$H_d\equiv0$. This yields the functional equation \eqref{intro-FE}.

To prove \textup{(b)}, we differentiate \eqref{intro-FE} at the zeros of
$\Phi$. It follows that
\[
\mathcal P_d(z)=- \frac{z^{d+1}\Phi''(z)+((d+1)z^d-2R_d(z))\Phi'(z)}{\Phi(z)}
\]
has no poles. Lemma~\ref{lem-logder} shows that $\mathcal P_d$ is a polynomial.
After passing to the modified function
$\widetilde{\Phi}=e^{-h_d}\Phi$, where
$h_d'(z)=R_d(z)/z^{d+1}$, we obtain
\[
\widetilde{\Phi}''+\frac{d+1}{z}\,\widetilde{\Phi}'+q_d(z)\widetilde{\Phi}=0,
\]
where $q_d$ is even. This implies that the polynomial
$K_d=\mathcal P_d+R_d'$ is even. The Liouville transform reduces this equation
on the positive half-line to $u''+Vu=0$, where the leading behavior of the
potential is determined by $K_d$. The density of the positive zeros of $\Phi$
is $1/(2\pi)$ because $\tau_{2n-1}\sim2\pi n$. Upper and lower Sturm-type
bounds for the number of zeros force $\deg K_d=d+1$ and show that its leading
coefficient is $1/4$. It is important that only the asymptotic relation
$\tau_k\sim\pi k$ is used here (cf.~\cite{Bo25}). The remaining
coefficients of $K_d$ are determined by \eqref{Kd-formula}.

Finally, to prove \textup{(c)}, the above equation and the properties of $K_d$
give $V(x)=1/4+O(x^{-2})$ as $x\to\infty$. The energy estimate \eqref{energy}
gives $u(x),u'(x)=O(1)$, and therefore
$\Phi(x)=O\bigl(|x|^{-(d+1)/2}\bigr)$ as $|x|\to\infty$. Since $\Phi$ has
finite exponential type, $\widehat\Phi$ has compact support. Taking the
Fourier transform of \eqref{intro-ODE} and using uniqueness of solutions away
from the zeros of the coefficient $1/4-t^2$, we obtain
$\supp\widehat\Phi\subset[-1/2,1/2]$. Hence the type of $\Phi$ is at most
$1/2$, while $\varphi(z)=\Phi(z)\Phi(-z)$ and the exact type $1$ of $\varphi$
show that it is exactly $1/2$.

The proof is substantially different from the one-dimensional case. For
$d=1$, the polynomial $P_1$ has degree~$0$, so $Q_1$ is absent. Starting with
$d=3$, additional low-frequency coefficients appear in $Q_d$, together with
the corresponding regularization. At the same time, the derivation of the
linear differential equation is uniform for all odd~$d$: first one proves
that $\mathcal P_d$ is entire and polynomial, and then determines its leading
coefficient from the zero density. For $d=1$, this argument gives a new proof
of the differential equation obtained in \cite{Bo25}.

\subsection{Known estimates for the constant}
Following \cite{Da21}, introduce the normalized Nikolskii constant
\begin{equation}\label{def-Lstar}
\mathcal L^*(d)=\frac{(2\pi)^d}{V_d}\,\mathcal C_d.
\end{equation}
It was proved in \cite{Da21} that for all $d\ge1$
\begin{equation}\label{Da21-bounds}
2^{-d}\le \mathcal L^*(d)\le {}_1F_2\Bigl(\frac d2;\frac d2+1,\frac d2+1;
-\frac{\beta_d^2}{4}\Bigr),
\end{equation}
where $\beta_d$ is the first positive zero of the Bessel function $J_{d/2}$.
As a consequence,
\[
\mathcal L^*(d)\le \bigl(\sqrt{2/e}\,\bigr)^d \bigl(1+O(d^{-2/3})\bigr),\quad
d\to\infty,
\]
where $\sqrt{2/e}=0.857\ldots$.

For example, for $d=1$ we have $\mathcal L^*(1)=\pi\mathcal C_1$ and
\[
\frac12\le \mathcal L^*(1)\le \frac1\pi\int_0^\pi\frac{\sin t}{t}\,dt=
0.589\ldots.
\]
The numerical value obtained in \cite{Bo25} is
\[
\mathcal L^*(1)
=0.54092882190183058939\ldots.
\]

\subsection{Organization of the paper}
Section~\ref{sec-prop-phi} collects and proves the properties of the extremal
function $\varphi$ needed below. Section~\ref{sec-utils} contains the auxiliary
results used later. Sections~\ref{sec-functional}--\ref{sec-Phi} prove,
respectively, parts~\textup{(a)}--\textup{(c)} of Theorem~\ref{thm-main}. In
Section~\ref{sec-d1}, the resulting formulas are compared with the known
one-dimensional result. Section~\ref{sec-d3} is devoted to the case $d=3$: we
derive a spectral boundary-value problem, construct a seven-diagonal numerical
scheme, and compute $\mathcal C_3$ to high accuracy. Section~\ref{sec-final}
contains further consequences and interpretations of the main identities,
including an equation for the extremal function itself, the finite part of a
series over the zeros of $\varphi$, and a zeta interpretation of the
coefficients of the polynomial $Q_d$. The obstruction to extending the method
to even dimensions is also discussed.

\section{Properties of the extremal function $\varphi$}\label{sec-prop-phi}

In this section we collect the main properties of the extremal function that
will be used later. Some of these facts are known \cite[Th.~4.2]{Da21}, but
for convenience we include short proofs.

\subsection{Existence of a real extremal function}
Let
\[
(f_n)_{n=1}^{\infty}\subset \PW_{1}^{1}(\mathbb R^d),\quad
f_n(0)=1,\quad \|f_n\|_1\to \mathcal C_d^{-1}.
\]
By the compactness theorem \cite[Th.~3.3.6]{Ni75}, there is a subsequence
$(f_{n_j})$ converging locally uniformly to a function
$f\in \PW_{1}^{1}(\mathbb R^d)$. Then $f(0)=1$ and, for every $R>0$,
\[
\int_{|x|\le R}|f(x)|\,dx= \lim_{j\to\infty} \int_{|x|\le R}|f_{n_j}(x)|\,dx\le
\mathcal C_d^{-1}.
\]
Letting $R\to\infty$, we obtain $\|f\|_1\le \mathcal C_d^{-1}$. Thus, $f$ is
an extremal function.

Replacing $f(z)$, if necessary, by $(1/2)(f(z)+f^{\#}(z))$, where
\[
f^{\#}(z)=\overline{f(\overline z)},
\]
we obtain a function with no larger $L^{1}$ norm. Thus, we may assume that $f$
is real-valued on $\mathbb R^d$.

\subsection{Reduction to a radial function}
Average the extremal function by setting
\[
\varphi(z)= \int_{|\xi|=1} f(z\xi)\,d\nu_{d}(\xi),\quad z\in\mathbb C,
\]
where $\nu_{d}$ is the normalized surface measure on $\mathbb{S}^{d-1}$ for
$d>1$ and $\nu_{1}=(1/2)(\delta_{-1}+\delta_{1})$. The function $\varphi$ is an
even real entire function, $\varphi(|\Cdot|)\in
\PW_{1}^{1}(\mathbb R^d)$, and $\varphi(0)=1$. By the triangle
inequality, Fubini's theorem, and the polar-coordinate formula,
$\|\varphi(|\Cdot|)\|_1\le \|f\|_1$. Hence $\varphi(|\Cdot|)$ is also an
extremal function.

\subsection{Boundedness and exact exponential type}
By Nikolskii's inequality,
\[
\|\varphi\|_\infty\le \mathcal C_d\|\varphi(|\Cdot|)\|_1=1.
\]

We show that the exponential type of $\varphi$ is exactly $1$. If it were less
than~$1$, one could choose $\rho<1$ so that the function
$x\mapsto\varphi(|x|/\rho)$ had type at most~$1$. It takes the value~$1$ at the
origin and
\[
\int_{\mathbb R^d}|\varphi(|x|/\rho)|\,dx= \rho^d\int_{\mathbb
R^d}|\varphi(|x|)|\,dx
<
\mathcal C_d^{-1},
\]
which contradicts extremality.

\subsection{Extremality conditions}
Let $\mathcal V$ be the subspace of radial functions
$h(|\Cdot|)\in\PW_{1}^{1}(\mathbb R^d)$ such that $h(0)=0$. The
extremality of $\varphi$ is equivalent to
\begin{equation}\label{phi-ineq-cond}
\|\varphi(|\Cdot|)\|_1\le \|\varphi(|\Cdot|)+h(|\Cdot|)\|_1, \quad h\in\mathcal
V.
\end{equation}
Since $\varphi$ is a nonzero entire function, the set
$\{\xi\in\mathbb R^d\colon \varphi(|\xi|)=0\}$ has measure zero. Therefore, by
Theorem~4.2.2 in \cite{Sh71}, inequality \eqref{phi-ineq-cond} is equivalent
to
\begin{equation}\label{phi-h}
\int_{\mathbb R^d} \sigma(|\xi|)h(|\xi|)\,d\xi= 0,\quad h\in\mathcal V,\quad
\sigma=\sign\varphi.
\end{equation}

Now let $g(|\Cdot|)\in\PW_{1}^{1}(\mathbb R^d)$ be arbitrary. Then
\[
g(|\Cdot|)-g(0)\varphi(|\Cdot|)\in\mathcal V,
\]
and therefore, by \eqref{phi-h},
\[
0= \int_{\mathbb R^d} \sigma(|\xi|)
\bigl(g(|\xi|)-g(0)\varphi(|\xi|)\bigr)\,d\xi.
\]
Since
\[
\int_{\mathbb R^d}|\varphi(|\xi|)|\,d\xi=
\mathcal C_d^{-1},
\]
we obtain the extremality condition in the form
\begin{equation}\label{g0-phi}
g(0)= \mathcal C_d \int_{\mathbb R^d} \sigma(|\xi|)g(|\xi|)\,d\xi,\quad
g(|\Cdot|)\in\PW_{1}^{1}(\mathbb R^d).
\end{equation}

Passing to polar coordinates and writing $\omega_{d-1}=dV_{d}$ for the area
of the unit sphere $\mathbb S^{d-1}$, we obtain the following one-dimensional
form. If $g$ is an even entire function of type at most~$1$ such that
$|x|^{d-1}g(x)\in L^1(\mathbb R)$, then
\begin{equation}\label{extremality-1D}
g(0)=\frac{\mathcal C_d\omega_{d-1}}{2} \int_{\mathbb
R}\sigma(x)g(x)|x|^{d-1}\,dx.
\end{equation}

\subsection{Zeros of the extremal function}
The function $\varphi$ has only simple real zeros $\pm \tau_{k}$,
$k\in\mathbb{N}$. Indeed, if $\varphi$ had a nonreal zero or a real zero of
multiplicity greater than one, then it could be written as $\varphi=pg$, where
$p$ is a nonconstant even real polynomial that is nonnegative on $\mathbb R$.
Then, for every $t>0$,
\[
p(t|\Cdot|)g(|\Cdot|)\in \PW_{1}^{1}(\mathbb R^d),
\]
because the ratio $p(t|x|)/p(|x|)$ is bounded for all sufficiently large
$|x|$, and multiplication by a polynomial does not increase the exponential
type. Moreover,
\[
\sign\varphi(x)= \sign\bigl(p(tx)g(x)\bigr)= \sign g(x)
\]
for almost every $x\in\mathbb R$. Therefore, by \eqref{g0-phi},
\[
1= \mathcal C_d \int_{\mathbb R^d} p(t|\xi|)|g(|\xi|)|\,d\xi,\quad t>0.
\]
The right-hand side is a nonconstant polynomial in $t$, which is impossible.

\subsection{Canonical product and zero asymptotics}
Since $\varphi\in\PW_{1}^{\infty}(\mathbb R)$, it belongs to the
Cartwright class. Since $\varphi$ is even and all its zeros are
simple and real, we obtain the canonical representation
\[
\varphi(z)= \prod_{k=1}^{\infty} \Bigl(1-\frac{z^2}{\tau_k^2}\Bigr),\quad
z\in\mathbb C.
\]

For a function $f$ with discrete positive zeros, let
\[
n_f(X)=|\{x\in(0,X]\colon f(x)=0\}|,\quad X>0,
\]
denote the number of its distinct zeros in $(0,X]$. By the zero-density
theorem for Cartwright-class functions \cite[Ch.~V, Th.~11]{Le80} and
the evenness of $\varphi$,
\[
n_\varphi(X)=\frac{X}{\pi}+o(X),\quad X\to\infty.
\]
Hence,
\begin{equation}\label{tau-props}
\tau_k\sim\pi k,\quad k\to\infty.
\end{equation}

\subsection{Uniqueness of the radial extremal function}
Let $\psi(|\Cdot|)$ be another radial extremal function. Then, by
\eqref{g0-phi},
\[
1= \mathcal C_d \int_{\mathbb R^d}|\psi(|\xi|)|\,d\xi= \mathcal C_d
\int_{\mathbb R^d} \sigma(|\xi|)\psi(|\xi|)\,d\xi,
\]
so $|\psi|= \psi \sign\varphi$ almost everywhere on $\mathbb R$. Similarly,
interchanging $\varphi$ and $\psi$, we obtain
$|\varphi|= \varphi \sign\psi$ almost everywhere. By these two identities
and continuity, $\varphi\psi=|\varphi\psi|$ everywhere on $\mathbb R$. Since all zeros of $\varphi$ and $\psi$ are simple, both functions change sign
at each of their zeros. Hence their zero sets coincide, and the canonical products give
$\psi\equiv\varphi$.

\subsection{Spectral characterization of the zeros}\label{subsec-zero-characterization}
Throughout this subsection, $\mathcal F_d$ denotes the $d$-dimensional Fourier
transform
\[
\mathcal F_d (f)(\xi)
=\int_{\mathbb R^d}f(x)e^{-i\langle x,\xi\rangle}\,dx,
\]
which is extended to $\mathcal S'(\mathbb R^d)$ by duality.
We shall use the normalized Bessel function
\[
j_\alpha(z)=2^\alpha\Gamma(\alpha+1)\,\frac{J_\alpha(z)}{z^\alpha},
\quad j_\alpha(0)=1.
\]
Let $\mathbf 1_E$ denote the characteristic function of a set $E$. 
Then (see, e.g., \cite{Da21})
\[
\mathcal F_d(\mathbf 1_{B_r^d})(\xi)
=V_dr^d j_{d/2}(r|\xi|).
\]

Since the sign of $\varphi$ changes at each zero $\tau_n$, we have
\[
\sigma(|\Cdot|)=2\sum_{n=1}^\infty(-1)^{n-1}\mathbf 1_{B_{\tau_n}^d},
\]
where the series is understood in the Abel sense: the limit as $\rho\uparrow1$
after multiplying the $n$th term by $\rho^{n-1}$. The validity of this
passage follows directly from the nesting of the balls: the corresponding
Abel sums are uniformly bounded and converge locally almost everywhere to
$\sigma(|\Cdot|)$.

On the other hand, the extremality condition is equivalent to
\[
\mathcal F_d(\sigma(|\Cdot|))
=\frac1{\mathcal C_d}\quad\text{on $B_1^d$}.
\]
Since $a_d=(2V_d\mathcal C_d)^{-1}$, we obtain the spectral identity
\begin{equation}\label{zero-characterization}
\sum_{n=1}^\infty(-1)^{n-1}\tau_n^d
j_{d/2}(\tau_nu)=a_d,\quad |u|<1,
\end{equation}
where the series is again understood in the Abel sense.

The converse statement also holds. Let $a>0$, and let the real-valued function
$\psi(|\Cdot|)\in \PW_1^1(\mathbb R^d)$ satisfy $\psi(0)=1$ and have only
simple real zeros. Denote its positive zeros by $0<t_1<t_2<\ldots.$ If
\[
\sum_{n=1}^\infty(-1)^{n-1}t_n^d
j_{d/2}(t_nu)=a,\quad |u|<1,
\]
in the Abel sense, then
\[
\mathcal F_d(\sign\psi(|\Cdot|))=2V_da
\quad\text{on $B_1^d$}.
\]
Set $\mathcal C=(2V_da)^{-1}$. From the last identity, first for functions of
exponential type less than~$1$ and then, by dilation, for functions of type at
most~$1$, we obtain
\[
g(0)=\mathcal C\int_{\mathbb R^d}\sign\psi(|\xi|)g(|\xi|)\,d\xi,
\quad
g(|\Cdot|)\in \PW_1^1(\mathbb R^d).
\]
Hence $\mathcal C_d\le \mathcal C$. On the other hand, taking $g=\psi$ gives
\[
1=\mathcal C\|\psi(|\Cdot|)\|_1.
\]
By the definition of $\mathcal C_d$,
$\|\psi(|\Cdot|)\|_1\ge\mathcal C_d^{-1}$, and therefore $\mathcal C\le\mathcal C_d$.
Thus,
\[
\mathcal C=\mathcal C_d,\quad a=a_d,
\]
and $\psi$ is an extremal function. By uniqueness, $\psi=\varphi$.

For $d=1$,
\[
j_{1/2}(z)=\frac{\sin z}{z},
\]
and after passing from our normalization of exponential type~$1$ to the
normalization of type~$\pi$, relation \eqref{zero-characterization} becomes
formula~(2.6) in \cite{Bo25}.

\section{Auxiliary results}\label{sec-utils}

This section collects standard facts used in the main proofs. The letter $C$
denotes positive constants whose value may change from line to line.

\begin{lem}\label{lem-Phi}
The function $\Phi$ defined by \eqref{intro-Phi} is entire and of finite
exponential type.
\end{lem}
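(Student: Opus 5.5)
We need to show that the product $\Phi(z)=\prod_{k\ge1}(1+(-1)^k z/\tau_k)$ converges to an entire function of finite exponential type. The plan is to pair consecutive factors, use the zero asymptotics \eqref{tau-props} to get convergence, and then bound the growth through the counting function of zeros.

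\emph{Convergence.} The series $\sum 1/\tau_k$ diverges, so the product is only conditionally convergent. We therefore group the factors with indices $2j-1$ and $2j$:
\[
\Bigl(1-\frac{z}{\tau_{2j-1}}\Bigr)\Bigl(1+\frac{z}{\tau_{2j}}\Bigr)
=1-z\Bigl(\frac1{\tau_{2j-1}}-\frac1{\tau_{2j}}\Bigr)-\frac{z^2}{\tau_{2j-1}\tau_{2j}}.
\]
The quantities $1/(\tau_{2j-1}\tau_{2j})$ are summable by \eqref{tau-props}. The coefficients $\delta_j=1/\tau_{2j-1}-1/\tau_{2j}$ are positive, and since $\tau_k$ increases to infinity, the series $\sum_j\delta_j$ is bounded by the telescoping-type alternating series $\sum_k(-1)^{k-1}/\tau_k$. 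That series converges by the Leibniz test. Hence $\sum_j|u_j(z)|<\infty$ locally uniformly, where $u_j(z)$ denotes the grouped factor minus one. So the grouped product converges locally uniformly to an entire function.

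Each individual factor tends to $1$ locally uniformly. Therefore the partial products over all $k$, and not only over even $k$, converge to the same limit. This makes $\Phi$ in \eqref{intro-Phi} well defined and entire, with zeros exactly at $(-1)^{k+1}\tau_k$, and it gives \eqref{varphi-Phi}.

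\emph{Growth.} To see finite exponential type, I would write $\Phi(z)=e^{cz}\prod_k E_1\bigl((-1)^{k+1}z/\tau_k\bigr)$ with $c=\sum_k(-1)^k/\tau_k$. This series converges by the same alternating argument, so the exponential prefactor is legitimate. The zero sequence $\{(-1)^{k+1}\tau_k\}$ has counting function $n(r)\le C r$ by \eqref{tau-props}, so it has finite upper density. The Lindelöf theorem \cite{Le80} then states that a genus-one canonical product with $n(r)=O(r)$ has finite exponential type if and only if the sums
\[
S(r)=\sum_{|a_k|\le r}\frac1{a_k}
\]
stay bounded as $r\to\infty$. Here $a_k=(-1)^{k+1}\tau_k$, so $S(r)$ is a partial sum of an alternating series with terms decreasing to zero. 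Hence $|S(r)|\le 1/\tau_1$ uniformly, and Lindelöf's theorem applies.

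A more elementary alternative avoids Lindelöf's theorem altogether. From \eqref{varphi-Phi} one has $|\Phi(z)\Phi(-z)|=|\varphi(z)|\le Ce^{|z|}$, but bounding $\Phi$ itself this way needs a lower bound on $|\Phi(-z)|$, which is harder. So I would rely on Lindelöf.

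The main obstacle is precisely this boundedness of $S(r)$, that is, the Lindelöf condition. It is handled entirely by the alternating sign structure together with the monotonicity of $\tau_k$; no quantitative zero-spacing information beyond \eqref{tau-props} is needed. I would also briefly verify that regrouping in the Lindelöf sums is harmless, since $S(r)$ is just a partial sum and equals the sign-alternating truncation.
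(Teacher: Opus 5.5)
Your proof is correct. The convergence part (pairing the factors, bounding $\sum_j\delta_j$ by the Leibniz series, and summing $1/(\tau_{2j-1}\tau_{2j})$ using $\tau_k\sim\pi k$) is exactly the paper's.

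For the growth bound you take a different route. You write $\Phi(z)=e^{cz}\prod_k E_1(z/a_k)$ and invoke Lindelöf's theorem. Your check that $S(r)=\sum_{\tau_k\le r}(-1)^{k+1}/\tau_k$ is a partial sum of an alternating series, and hence lies in $[0,1/\tau_1]$, is valid.

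The paper instead estimates each grouped factor directly:
\[
\Bigl|\Bigl(1-\frac{z}{\tau_{2j-1}}\Bigr)\Bigl(1+\frac{z}{\tau_{2j}}\Bigr)\Bigr|\le e^{|z|\delta_j}\Bigl(1+\frac{|z|^2}{\tau_{2j-1}\tau_{2j}}\Bigr).
\]
Combined with $\tau_{2j-1}\tau_{2j}\ge Cj^2$, this gives, for $r=|z|$,
\[
|\Phi(z)|\le e^{C_1 r}\prod_j\Bigl(1+\frac{C_2^2r^2}{j^2}\Bigr)=e^{C_1r}\,\frac{\sinh(\pi C_2 r)}{\pi C_2 r}.
\]
So the elementary alternative you were looking for was already in your hands. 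It does not pass through $\varphi$ and needs no lower bound on $|\Phi(-z)|$. It uses only the same pairing you used for convergence, plus the product formula for $\sinh$.

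What each route buys: the paper's argument is short and self-contained. Yours relies on a heavier classical theorem, but it isolates the actual mechanism: the alternating signs keep $S(r)$ bounded. It also produces, as a by-product, the Weierstrass form with $c=\Phi'(0)=\sum_k(-1)^k/\tau_k$, which the paper needs later in Lemma~\ref{lem-HB}.
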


\begin{proof}
Group neighboring factors:
\[
\Phi(z)=\prod_{k=1}^\infty \Bigl(1-\frac{z}{\tau_{2k-1}}\Bigr)
\Bigl(1+\frac{z}{\tau_{2k}}\Bigr).
\]
Since $1/\tau_{k}\downarrow 0$,
\[
C_{1}=\sum_{k=1}^{\infty} \Bigl(\frac1{\tau_{2k-1}}-\frac1{\tau_{2k}}\Bigr)<\infty.
\]
Also, \eqref{tau-props} implies that $\tau_{2k-1}\tau_{2k}\ge Ck^2$ for all
sufficiently large $k$. Hence the product converges locally uniformly. To
estimate the growth, use
\[
\Bigl|\Bigl(1-\frac{z}{\tau_{2k-1}}\Bigr)
\Bigl(1+\frac{z}{\tau_{2k}}\Bigr)\Bigr|\le
\exp\Bigl(|z|\Bigl(\frac1{\tau_{2k-1}}-\frac1{\tau_{2k}}\Bigr)\Bigr)
\Bigl(1+\frac{|z|^2}{\tau_{2k-1}\tau_{2k}}\Bigr).
\]
Therefore, for $r=|z|$,
\[
|\Phi(z)|\le e^{C_{1}r}\prod_{k=1}^{\infty}\Bigl(1+\frac{C_{2}^{2}
r^2}{k^2}\Bigr) =e^{C_{1}r}\,\frac{\sinh(\pi C_{2}r)}{\pi C_{2}r}\le
C_{3}e^{C_{4}r}.
\]
\end{proof}

Following \cite[Ch.~VII]{Le80}, an entire function $E$ is called a
Hermite--Biehler function if it has no zeros in the half-plane $\Im z\le 0$
and
\begin{equation}\label{E-E}
|E(z)|<|E^\#(z)|,\quad \Im z>0.
\end{equation}

\begin{lem}\label{lem-HB}
The function
\[
E(z)=\Phi(z)+i\Phi(-z)
\]
is a Hermite--Biehler function.
\end{lem}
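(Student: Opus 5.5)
Here $\Phi$ is real on $\mathbb R$ (all $\tau_k$ are real), so $\Phi^\#=\Phi$ and $E^\#(z)=\Phi(z)-i\Phi(-z)$. The plan is to use the Hermite--Biehler criterion in \cite[Ch.~VII]{Le80} for real entire functions. Write $E=A-iB$ with
\[
A(z)=\Phi(z),\qquad B(z)=-\Phi(-z).
\]
Both $A$ and $B$ are real entire functions of finite exponential type, by Lemma~\ref{lem-Phi}. Their zeros are real and simple: $A$ vanishes at $\tau_{2k-1}$ and $-\tau_{2k}$, and $B$ vanishes at $-\tau_{2k-1}$ and $\tau_{2k}$. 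Since $0<\tau_1<\tau_2<\dots$, the combined zero set of $A$ and $B$ is the full set $\{\pm\tau_k\}$, which is exactly the zero set of $\varphi$. On the positive axis these zeros alternate, $\tau_1\in Z(A)$, $\tau_2\in Z(B)$, $\tau_3\in Z(A)$, and so on. On the negative axis they alternate as well: $-\tau_1\in Z(B)$, $-\tau_2\in Z(A)$, and so on. Across the origin the nearest zeros are $-\tau_1\in Z(B)$ and $\tau_1\in Z(A)$. So the zeros of $A$ and $B$ strictly interlace on all of $\mathbb R$.

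For functions of finite type, strict interlacing is not enough; we also need the correct orientation. The cleanest route is to show that
\[
\frac{B(z)}{A(z)}=-\frac{\Phi(-z)}{\Phi(z)}
\]
has positive imaginary part in $\Im z>0$ (or negative, after a sign choice). I would expand this ratio as a Mittag-Leffler series over the zeros of $\Phi$. A cleaner alternative is to verify the Wronskian-type condition $A'B-AB'\ne0$ of constant sign on $\mathbb R$, which for interlacing zeros reduces to checking signs of residues. At a zero $\zeta$ of $\Phi$, the residue of $\Phi(-z)/\Phi(z)$ equals $\Phi(-\zeta)/\Phi'(\zeta)$. I would compute the sign of this quantity from the product representation, checking that it is the same for every zero ($\tau_{2k-1}$ and $-\tau_{2k}$). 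This works because between consecutive zeros of $\Phi$ exactly one zero of $\Phi(-z)$ occurs, so the sign bookkeeping is consistent. Then I would invoke the standard theorem in \cite[Ch.~VII, Th.~3]{Le80}, or a direct Nevanlinna--Pick argument. It says that if $A,B$ are real entire functions of the Cartwright class with real, simple, interlacing zeros and no common zeros, then a suitable combination $A\mp iB$ is Hermite--Biehler. The required genus or Cartwright hypothesis holds because $\Phi$ has finite type and its zeros have finite density.

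Finally I would fix the sign so that the zero-free half-plane is $\Im z\le0$. On $\mathbb R$ we have $|E(x)|=|E^\#(x)|$, so $E$ has no real zeros: $E(x)=0$ would force $\Phi(x)=\Phi(-x)=0$, contradicting disjointness of $Z(A)$ and $Z(B)$. For the orientation I would compare the two sides at a test point such as $z=iy$ with $y\to\infty$, or use the value at $z=0$ together with $\Phi'(0)$. Specifically,
\[
\frac{d}{dy}\,\frac{|E(iy)|^2-|E^\#(iy)|^2}{4}\Big|_{y=0}\;\propto\;\Phi'(0)\,\Phi(0).
\]
Here $\Phi(0)=1$ and
\[
\Phi'(0)=\sum_k\frac{(-1)^k}{\tau_k}<0,
\]
since the $1/\tau_k$ decrease, so the first term $-1/\tau_1$ dominates the alternating sum. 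Combined with the constant sign of $\Im(B/A)$ in the upper half-plane, this fixes $|E|<|E^\#|$ there.

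I expect the main obstacle to be the orientation and sign bookkeeping. Showing that $\Im\bigl(\Phi(-z)/\Phi(z)\bigr)$ keeps one sign in $\Im z>0$ needs the partial-fraction expansion to converge without an extra linear term. To handle this I would group terms in pairs, as in Lemma~\ref{lem-Phi}, and use $\tau_k\sim\pi k$ from \eqref{tau-props} to control the tail.
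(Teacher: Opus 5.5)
Your proposal is correct and follows essentially the paper's proof: strict interlacing of the zeros of $\Phi(z)$ and $\Phi(-z)$, then Levin's Hermite--Biehler theorem, then the orientation fixed by $|E(iy)|^2-|E^\#(iy)|^2=8\Phi'(0)y+O(y^2)$ with $\Phi'(0)<0$. The one point to tighten is the growth hypothesis: as in the paper, it comes from $\Phi(z)$ and $\Phi(-z)$ being canonical products with no exponential factor and with constant $1$, whereas finite type plus finite zero density alone would not exclude a factor such as $e^{bz}$.
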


\begin{proof}
We use the Hermite--Biehler theorem \cite[Ch.~VII, Th.~$3'$]{Le80}.
Set $P(z)=\Phi(z)$ and $Q(z)=\Phi(-z)$, so that $E=P+iQ$. The zeros of $P$ and
$Q$ are, respectively,
\begin{align*}
\ldots<-\tau_6<-\tau_4<-\tau_2<&\tau_1<\tau_3<\tau_5<\ldots,\\
\ldots<-\tau_5<-\tau_3<-\tau_1<&\tau_2<\tau_4<\tau_6<\ldots.
\end{align*}
Thus they are simple, real, and strictly interlacing. Moreover, the canonical
product \eqref{intro-Phi}, and the analogous product for $\Phi(-z)$, contain no
exponential factors, and the constants in front of them are both $1$, hence
have the same sign. All assumptions of the Hermite--Biehler theorem are
satisfied. The direction of inequality \eqref{E-E} can also be checked at
$z=iy$, $y\downarrow0$, from the Taylor expansion
\[
|E(iy)|^2-|E^\#(iy)|^2=8cy+O(y^2),
\]
where
\[
c=\Phi'(0)=\sum_{k\ge1}\frac{(-1)^k}{\tau_k}<0
\]
by the Leibniz criterion.
\end{proof}

Next, let $\mathcal S(\mathbb R)$ be the Schwartz space,
$\mathcal S'(\mathbb R)$ the space of tempered distributions, and
$\mathrm{BMO}(\mathbb R)\subset \mathcal S'(\mathbb R)$ the space of
functions of bounded mean oscillation. We use the Fourier transform
\[
\widehat f(t)=\int_{\mathbb R}f(x)e^{-itx}\,dx,
\]
extended to $\mathcal S'(\mathbb R)$ by duality. In particular,
$\widehat{1}=2\pi \delta_{0}$. The symbol $*$ denotes convolution. We write
$\mathcal H$ for the Hilbert transform and use its standard extension
\[
\mathcal H\colon L^\infty(\mathbb R)\to\mathrm{BMO}(\mathbb R),
\]
for which
\[
\widehat{\mathcal H f}(t)=-i\sign t\,\widehat f(t),\quad t\ne0.
\]
All boundary values below are understood in the sense of distributions. For a
function~$F$ with boundary values on the real axis, write
\[
F^\pm(x)=\lim_{\varepsilon\downarrow0}F(x\pm i\varepsilon).
\]

\begin{lem}\label{lem-analytic-signal}
Let $F$ be analytic in the upper half-plane and suppose that $\Re F$ is
bounded there. If $u$ is the boundary value of $\Re F$, then the boundary
value of $F$ exists in $\mathrm{BMO}(\mathbb R)$ and
\begin{equation}\label{F-u}
\supp\widehat F\subset[0,\infty),\quad
\widehat F=2\widehat u\quad\text{on $(0,\infty)$}.
\end{equation}
Moreover, a possible component of $\widehat F$ supported at the origin has the
form $\gamma\delta_0$, where $\gamma\in \mathbb{C}$.
\end{lem}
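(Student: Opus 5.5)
The plan is to reduce everything to the classical theory of bounded harmonic functions in the upper half-plane and of Hardy-type analytic extensions. Write $F=U+iV$ with $U=\Re F$ bounded and harmonic in $\Im z>0$. A bounded harmonic function is the Poisson integral of its boundary values, so $U(x+iy)=(P_y*u)(x)$ with $u\in L^\infty(\mathbb R)$ the weak-$*$ boundary value. The analytic function $G=(P_y+iQ_y)*u$, where $Q_y$ is the conjugate Poisson kernel taken in the modified form $Q_y(x)-Q_1(x)$ so that the convolution with $L^\infty$ data converges, is analytic with $\Re G=U$. Hence $F-G$ is analytic with vanishing real part, so $F=G+i\beta$ for some real constant $\beta$. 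The boundary value of $G$ is $u+i\mathcal H u$, and the extension $\mathcal H\colon L^\infty\to\mathrm{BMO}$ fixed above gives $\mathcal Hu\in\mathrm{BMO}$. I will verify the distributional convergence $G(\cdot+i\varepsilon)\to u+i\mathcal Hu$ by testing against Schwartz functions: on the Fourier side, $\widehat{P_\varepsilon}(t)=e^{-\varepsilon|t|}$, and the conjugate kernel contributes $-i\sign t\,e^{-\varepsilon|t|}$, modulo the constant normalization from $Q_1$, which only produces a multiple of $\delta_0$.

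Next I compute the spectrum. For $t\ne0$,
\[
\widehat{u+i\mathcal Hu}(t)=\widehat u(t)+\sign t\,\widehat u(t)=(1+\sign t)\,\widehat u(t),
\]
which equals $2\widehat u$ on $(0,\infty)$ and $0$ on $(-\infty,0)$. Some care is needed because $\widehat u$ is only a tempered distribution and the multiplier $\sign t$ is singular at $0$. I will therefore state the identity in the weak sense: for test functions $\psi\in\mathcal S$ with $\supp\psi\subset(0,\infty)$ or $\supp\psi\subset(-\infty,0)$, the pairing is computed away from the origin, where $\sign t$ is smooth. This yields $\supp\widehat F\subset[0,\infty)$ and $\widehat F=2\widehat u$ on $(0,\infty)$. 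The constant $i\beta$ contributes only $2\pi i\beta\,\delta_0$.

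For the final claim, that the component of $\widehat F$ at the origin is $\gamma\delta_0$ and not a combination of derivatives of $\delta_0$, I expect the main obstacle: a priori a distribution supported at a point is a finite sum $\sum c_k\delta_0^{(k)}$. To rule out $k\ge1$, I will use that the boundary value lies in $\mathrm{BMO}$, so it has at most logarithmic growth in the mean. A term $\delta_0^{(k)}$ with $k\ge1$ would correspond to a polynomial component of degree $k$. This can be excluded in two ways. One is to note that $F=G+i\beta$ and that the only point-supported ambiguity in $\mathcal H$ on $L^\infty$ is an additive constant, by the standard definition via the modified kernel. The other is to check directly that a $\mathrm{BMO}$ function whose Fourier transform is supported at $0$ must be a polynomial lying in $\mathrm{BMO}$, hence a constant.

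Combining the two contributions, the point-supported part is $2\pi(c+i\beta)\delta_0$ for the constants $c$ arising from $\mathcal H$ and from $\beta$. This gives $\gamma\delta_0$ with $\gamma\in\mathbb C$, and completes \eqref{F-u}.
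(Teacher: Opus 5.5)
Your proposal follows the paper's proof essentially step for step: write the boundary value as $F=u+i\mathcal Hu+i\beta$, read off $\widehat F=(1+\sign t)\widehat u$ away from the origin, and exclude $\delta_0^{(k)}$, $k\ge1$, because the only polynomials in $\mathrm{BMO}(\mathbb R)$ are constants.

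Two small repairs are needed.

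First, the normalized conjugate kernel must subtract a term that does not depend on $z$. For example, use $\frac1\pi\int\bigl(\frac{x-t}{(x-t)^2+y^2}+\frac{t}{1+t^2}\bigr)u(t)\,dt$. Subtracting $Q_1(x-t)$, as you wrote, destroys harmonicity, so $G$ would not be analytic and the step ``$F-G$ is analytic with vanishing real part'' would fail.

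Second, of your two arguments at the origin, only the $\mathrm{BMO}$/polynomial one (the paper's) actually rules out higher derivatives of $\delta_0$. The additive-constant ambiguity in the definition of $\mathcal H$ says nothing about point-supported terms that could come from $\widehat u$ itself.
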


\begin{proof}
We use standard facts about boundary values of analytic functions and the
Hilbert transform \cite[Ch.~VI, X]{Ko98}. Since $\Re F$ is bounded, we
have $u\in L^\infty(\mathbb R)$ and $\mathcal Hu\in\mathrm{BMO}(\mathbb R)$.
The harmonic function $\Im F$ is harmonic conjugate to $\Re F$, so on the
boundary
\[
F=u+i\mathcal Hu+ic
\]
for some constant $c\in\mathbb R$. Hence, on $\mathbb R\setminus\{0\}$,
\[
\widehat F(t)=(1+\sign t)\widehat u(t),
\]
which gives \eqref{F-u}.
Any distribution supported at the origin is a finite linear combination of
$\delta_0$ and its derivatives, and its inverse Fourier transform is a
polynomial. Since $F\in\mathrm{BMO}(\mathbb R)$ and the only polynomials in
$\mathrm{BMO}(\mathbb R)$ are constants, only a term of the form
$\gamma\delta_0$, $\gamma\in\mathbb C$, may occur at the origin.
\end{proof}

\begin{lem}\label{lem-multiplier}
Let $f\in\PW_{1}^{\infty}(\mathbb R)$ and
$T\in\mathcal S'(\mathbb R)$. Then
$fT\in \mathcal S'(\mathbb R)$ and
\begin{equation}\label{f-T}
\widehat{fT}=\frac1{2\pi}\,\widehat f*\widehat T,\quad
\supp\widehat{fT}\subset\supp\widehat f+\supp\widehat T.
\end{equation}
\end{lem}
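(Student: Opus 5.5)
The plan is to reduce the product to the classical multiplication–convolution theorem in $\mathcal S'(\mathbb R)$, using the fact that $f$ has compactly supported Fourier transform.

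\emph{Step 1: $f$ is a multiplier on $\mathcal S'(\mathbb R)$.} Since $f\in\PW_1^\infty(\mathbb R)$, $f$ is bounded on $\mathbb R$. Its Fourier transform $\widehat f$ is a distribution supported in $[-1,1]$ (Paley--Wiener--Schwartz). Then Bernstein's inequality gives $\|f^{(k)}\|_\infty\le\|f\|_\infty$ for all $k$. Hence $f\in\mathcal O_M(\mathbb R)$: it is smooth, and all derivatives are bounded, so in particular polynomially bounded. Therefore $\phi\mapsto f\phi$ is continuous on $\mathcal S(\mathbb R)$, and $fT$, defined by $\langle fT,\phi\rangle=\langle T,f\phi\rangle$, belongs to $\mathcal S'(\mathbb R)$.

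\emph{Step 2: the convolution formula.} Since $\widehat f$ is a compactly supported distribution, the convolution $\widehat f*\widehat T$ is well defined in $\mathcal S'(\mathbb R)$ for every $T\in\mathcal S'(\mathbb R)$. It is given by $\langle \widehat f*\widehat T,\psi\rangle=\langle \widehat T,\check{\widehat f}*\psi\rangle$, where $\check{\widehat f}*\psi\in\mathcal S$.

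I will verify \eqref{f-T} first for $T=\phi\in\mathcal S(\mathbb R)$, where it is the classical identity $\widehat{f\phi}=(2\pi)^{-1}\widehat f*\widehat\phi$ in our normalization. Next I extend it to general $T$ by density of $\mathcal S$ in $\mathcal S'$ (weak-$*$). Both sides are continuous in $T$ for that topology:
\begin{itemize}
\item the left side, because multiplication by $f\in\mathcal O_M$ and the Fourier transform are continuous on $\mathcal S'$;
\item the right side, because convolution with a fixed compactly supported distribution is continuous on $\mathcal S'$.
\end{itemize}
Alternatively, one can verify the identity directly by duality. Pair with a test function $\psi$, and check that
\[
\mathcal F^{-1}\bigl(\check{\widehat f}*\psi\bigr)=(2\pi)^{-1}f\,\mathcal F^{-1}\psi,
\]
up to the normalization constants. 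This is again the Schwartz-class case.

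\emph{Step 3: the support inclusion.} This is the standard support theorem for convolutions in which one factor has compact support: $\supp(S*U)\subset\supp S+\supp U$. In our setting $\supp\widehat f$ is compact, so the sum $\supp\widehat f+\supp\widehat T$ is closed and the inclusion is stated without closure.

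The only step needing care is the justification in Step 1 that $f\in\mathcal O_M$ and that $\supp\widehat f\subset[-1,1]$. This is where Bernstein's inequality and the Paley--Wiener--Schwartz theorem are invoked. Everything else consists of standard distribution-theory facts, which I would cite rather than reprove.
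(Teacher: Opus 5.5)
Your proposal is correct and follows the paper's own proof. In both, Bernstein's inequality makes multiplication by $f$ continuous on $\mathcal S$ and hence on $\mathcal S'$, and Paley--Wiener--Schwartz gives $\supp\widehat f\subset[-1,1]$, so the convolution is defined and the support inclusion is the standard one. Your extension from the Schwartz case by weak-$*$ density only spells out the identity $\widehat{fT}=(2\pi)^{-1}\widehat f*\widehat T$, which the paper asserts without further justification.
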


\begin{proof}
By Bernstein's inequality, $\|f^{(k)}\|_\infty\le \|f\|_\infty$, $k\ge0$.
Therefore multiplication by~$f$ acts continuously on $\mathcal S(\mathbb R)$
and, by duality, on $\mathcal S'(\mathbb R)$.

By the Paley--Wiener--Schwartz theorem,
$\supp\widehat f\subset[-1,1]$ (see \cite[Th.~7.3.1]{Ho90}). Therefore
the convolution $\widehat f*\widehat T$ is defined and
$\widehat{fT}=(\widehat f*\widehat T)/(2\pi)$. Finally, the standard support
property of convolution gives the last inclusion in \eqref{f-T}.
\end{proof}

\begin{lem}\label{lem-PP}
If $g\in\PW_1^1(\mathbb R)$ and
$\{x_n\}_{n=1}^\infty\subset\mathbb R$ is uniformly separated, then
\[
\sum_{n=1}^\infty |g'(x_n)|\le C\|g\|_1.
\]
\end{lem}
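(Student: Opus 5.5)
The plan is to reduce the estimate to two standard facts: Bernstein's inequality in $L^1$, and the Plancherel--Polya inequality $\sum_n |h(x_n)|\le C\|h\|_1$ for $h$ of exponential type and uniformly separated points. We apply the latter to $h=g'$.

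\textbf{Step 1: $g'$ is in the right class.} Since $g\in\PW_1^1(\mathbb R)$, its derivative $g'$ is again an entire function of exponential type at most $1$. By Bernstein's inequality in $L^1$,
\[
\|g'\|_1\le\|g\|_1 .
\]
This can be seen via the Paley--Wiener--Schwartz theorem together with a Fourier multiplier argument, or directly from the Boas interpolation formula. Hence $g'\in\PW_1^1(\mathbb R)$.

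\textbf{Step 2: Plancherel--Polya for $h=g'$.} Let $\delta>0$ be the separation constant, so $|x_n-x_m|\ge\delta$ for $n\ne m$. The intervals $I_n=(x_n-\delta/2,x_n+\delta/2)$ are then pairwise disjoint. I would prove the inequality by subharmonicity: $|h|$ is subharmonic, so averaging over the disc $D_n$ of radius $\delta/2$ centred at $x_n$ gives
\[
|h(x_n)|\le \frac{4}{\pi\delta^2}\int_{D_n}|h(x+iy)|\,dx\,dy .
\]
The discs $D_n$ are disjoint, because they lie in the strip $|y|<\delta/2$ and their projections $I_n$ are disjoint. Summing over $n$ therefore gives
\[
\sum_n|h(x_n)|\le \frac{4}{\pi\delta^2}\int_{|y|\le\delta/2}\int_{\mathbb R}|h(x+iy)|\,dx\,dy .
\]

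\textbf{Step 3: horizontal lines.} It remains to control $\int_{\mathbb R}|h(x+iy)|\,dx$ uniformly in $|y|\le\delta/2$. For $h$ of type $1$ in $L^1(\mathbb R)$ the standard bound is
\[
\int_{\mathbb R}|h(x+iy)|\,dx\le e^{|y|}\|h\|_1 .
\]
This follows from the Phragmén--Lindelöf principle applied in the half-plane to $h(z+iy)$ tested against $L^\infty$ functions. Alternatively, one writes $h(\cdot+iy)=h*K_y$ with a kernel $K_y$ whose $L^1$ norm is bounded by $Ce^{|y|}$, obtained by de la Vallée Poussin smoothing of the multiplier $e^{-ty}$ on $[-1,1]$. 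Combining Steps 2 and 3 gives
\[
\sum_n|h(x_n)|\le C\|h\|_1,
\]
and with Step 1 this yields
\[
\sum_n|g'(x_n)|\le C\|g'\|_1\le C\|g\|_1 .
\]

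The only step needing care is Step 3, the uniform $L^1$ bound on horizontal lines. I would take it from the classical literature (Plancherel--Polya, or Nikolskii's book \cite{Ni75}) rather than rederive it. The rest is the elementary disjoint-disc counting above.
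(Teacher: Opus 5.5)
Your proposal is correct and follows the paper's argument: apply the Plancherel--Polya inequality to $g'$, then use Bernstein's inequality $\|g'\|_1\le\|g\|_1$. The only difference is that you sketch a proof of the Plancherel--Polya step, via the sub-mean-value property on disjoint discs and the bound $\int_{\mathbb R}|h(x+iy)|\,dx\le e^{|y|}\|h\|_1$, whereas the paper simply cites \cite{Pe07}.
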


\begin{proof}
Since $g'\in\PW_1^1(\mathbb R)$, it is enough to apply successively
the Plancherel--Polya inequality \cite[(1.1)]{Pe07} and Bernstein's inequality:
\[
\sum_{n=1}^\infty |g'(x_n)|\le C\|g'\|_1\le C\|g\|_1.
\]
\end{proof}

\begin{lem}\label{lem-logder}
Let $f$ be an entire function of finite order and let $U_1$, $U_2$ be
polynomials. If the function
\[
G= U_2\,\frac{f''}{f}+U_1\,\frac{f'}{f}
\]
has no poles, then it is a polynomial.
\end{lem}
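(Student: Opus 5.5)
The plan is to show that $G$ is an entire function of finite order that grows at most polynomially outside a small exceptional set, and then to conclude by a Liouville-type argument.

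\emph{Step 1: $G$ is entire.} Since $f$ is entire, $f'/f$ and $f''/f$ are meromorphic with poles only at zeros of $f$. Hence $G$ is meromorphic, and since it has no poles by hypothesis, $G$ is entire.

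\emph{Step 2: logarithmic-derivative estimates.} Let $\rho<\infty$ be the order of $f$. By Gundersen's estimates for logarithmic derivatives of finite-order meromorphic functions, for every $\varepsilon>0$ there is a set $E\subset(1,\infty)$ of finite logarithmic measure such that
\[
\Bigl|\frac{f^{(k)}(z)}{f(z)}\Bigr|\le |z|^{k(\rho-1+\varepsilon)},\quad k=1,2,\quad |z|=r\notin E\cup[0,1].
\]
Alternatively, one can use the Hadamard factorization $f=z^me^{P}\Pi$ together with the standard bound $|\Pi'/\Pi(z)|\le C r^{\rho-1+\varepsilon}$ off exceptional discs around the zeros. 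These discs have finite total radial projection, so they can be avoided on circles $|z|=r_j$ with $r_j\to\infty$. In either form we obtain a sequence $r_j\to\infty$ such that
\[
\max_{|z|=r_j}|G(z)|\le C r_j^{N},
\]
with $N$ depending only on $\rho$ and on $\deg U_1$, $\deg U_2$.

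\emph{Step 3: conclusion.} Apply Cauchy's estimates for the Taylor coefficients of the entire function $G$ on the circles $|z|=r_j$. For $n>N$ the coefficient of $z^n$ is bounded by $C r_j^{N-n}\to0$, so it vanishes. Hence $G$ is a polynomial of degree at most $N$.

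The main obstacle is Step 2: we need a uniform polynomial bound on entire circles, not merely off small discs. The key point is that the exceptional set has finite logarithmic measure, which leaves an unbounded set of admissible radii. If one prefers to avoid Gundersen's theorem, the route via Hadamard factorization with Cartan-type estimates yields the same conclusion. Note that in the intended application $f=\Phi$ has finite exponential type by Lemma~\ref{lem-Phi}, so $\rho\le1$ and the bounds simplify to $|f^{(k)}/f|\le r^{k\varepsilon}$.
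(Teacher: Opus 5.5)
Your argument is correct and is essentially the paper's proof. Both use Gundersen's Corollary~2 to get a polynomial bound for $f'/f$ and $f''/f$ off a radial set of finite logarithmic measure; you then apply Cauchy's estimates along the good circles $r_j\to\infty$, while the paper uses the maximum principle with a good radius in $[r,2r]$ to get polynomial growth on every circle, which is an equivalent finish. One small addition: Gundersen's estimate is stated for transcendental $f$, so you should first set aside the case of polynomial $f$, as the paper does. That case is trivial, since $f'/f$ and $f''/f$ are then rational and tend to $0$ at infinity.
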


\begin{proof}
If $f$ is a polynomial, the statement is obvious. Thus, assume that $f$ is
transcendental. By the logarithmic derivative estimate \cite[Cor.~2]{Gu88},
there exists a set $E\subset(1,\infty)$ of finite logarithmic measure such that,
for $|z|\notin E$, the functions $f'(z)/f(z)$ and $f''(z)/f(z)$ grow no faster
than some power of $|z|$. Hence, for the entire function $G(z)$,
\[
|G(z)|\le C|z|^N,\quad |z|\notin E,
\]
with some $C,N>0$.

Since $E$ has finite logarithmic measure, for every sufficiently large~$r$
there is an $R\notin E$ in the interval $[r,2r]$. By the maximum principle,
\[
\max_{|z|=r}|G(z)|
\le \max_{|z|=R}|G(z)|
\le CR^N
\le C2^N r^N.
\]
Thus, $G$ has polynomial growth and hence is a polynomial.
\end{proof}

\begin{lem}\label{lem-Liouville}
Let $p\in C^1(A,B)$, $q\in C(A,B)$, and let $y\in C^2(A,B)$ satisfy the
differential equation
\[
y''+py'+qy=0.
\]
Then the Liouville transform
\[
u(x)=\exp\Bigl(\frac12\int_A^x p\Bigr)y(x)
\]
reduces this equation to
\[
u''+Vu=0,\quad V=q-\frac{p'}2-\frac{p^2}{4}.
\]
\end{lem}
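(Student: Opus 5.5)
This is a direct computation, which I would carry out as follows. Write $E(x)=\exp\bigl(\frac12\int_A^x p\bigr)$, so that $u=Ey$. Then
\[
E'=\tfrac12 pE,\qquad E''=\Bigl(\tfrac12 p'+\tfrac14 p^2\Bigr)E.
\]
Since $p\in C^1$, $E$ is $C^2$, and hence $u\in C^2(A,B)$. (If $\int_A^x$ is not defined at the endpoint, replace $A$ by any fixed interior point. This only multiplies $u$ by a nonzero constant and does not affect the equation.)

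The plan is to invert the substitution, $y=E^{-1}u$, and compute derivatives. Using $(E^{-1})'=-\frac12 pE^{-1}$, we get
\[
y'=E^{-1}\bigl(u'-\tfrac12 pu\bigr),
\]
\[
y''=E^{-1}\Bigl(u''-pu'+\bigl(\tfrac14p^2-\tfrac12p'\bigr)u\Bigr).
\]
Substitute these into $y''+py'+qy=0$ and multiply by $E\neq0$. The $u'$ terms cancel: $-pu'+pu'=0$. The coefficient of $u$ becomes
\[
\tfrac14p^2-\tfrac12p'-\tfrac12p^2+q=q-\tfrac{p'}2-\tfrac{p^2}4=V.
\]
This gives $u''+Vu=0$ on $(A,B)$. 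Here $V$ is continuous because $p\in C^1$ and $q\in C$.

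There is no real obstacle. The only point needing care is the regularity bookkeeping. We need $p\in C^1$ so that $y''$ can be expressed through $p'$ and $V$ is defined. We also need the exponential factor to be nonvanishing so that the equations for $y$ and $u$ are equivalent.
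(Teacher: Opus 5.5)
Your proof is correct and is exactly the paper's argument: the paper's proof reads simply ``Direct substitution,'' and your computation of $y'$ and $y''$ via $E^{-1}$, followed by the cancellation of the $u'$ terms, is the same calculation written out. Your remark about moving the base point of $\int_A^x p$ to an interior point is also apt. In the paper's later application, $p=(d+1)/x$ on $(0,\infty)$ is not integrable at $0$, which is why the paper says ``up to a constant factor'' there.
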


\begin{proof}
Direct substitution.
\end{proof}

\begin{lem}\label{lem-zero-count}
Let $V\in C[A,B]$ be real-valued, $u\in C^2[A,B]\setminus \{0\}$, and
\[
u''+Vu=0\quad \text{on $[A,B]$}.
\]

\textup{(a)} If $V\le M^2$ on $[A,B]$, where $M\ge0$, then the number of
distinct zeros of $u$ on $[A,B]$ is at most
\begin{equation}\label{a-estim}
\frac{M}{\pi}\,(B-A)+1.
\end{equation}

\textup{(b)} If $V\ge m^2$ on $[A,B]$, where $m\ge0$, then the number of
distinct zeros of $u$ on $[A,B]$ is at least
\begin{equation}\label{b-estim}
\frac{m}{\pi}\,(B-A)-1.
\end{equation}
\end{lem}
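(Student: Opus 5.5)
We would prove both parts of Lemma~\ref{lem-zero-count} by Sturm comparison with the constant-coefficient equations $v''+M^2v=0$ and $v''+m^2v=0$. Their solutions $\sin(M(x-x_0))$ and $\sin(m(x-x_0))$ have consecutive zeros exactly $\pi/M$ (respectively $\pi/m$) apart. First note that, since $u\not\equiv0$ solves a linear second-order equation with continuous coefficient, every zero of $u$ is simple: if $u(x_0)=u'(x_0)=0$, uniqueness gives $u\equiv0$. Hence the zeros of $u$ are isolated and finite in number on the compact interval $[A,B]$.

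For part (a), the case $M=0$ is handled directly. Here $u''=-Vu$ with $V\le0$, so between two consecutive zeros $x_1<x_2$ the function $u$ (say positive there) satisfies $u''\ge0$. A convex function vanishing at both endpoints is $\le0$ on $[x_1,x_2]$, which is a contradiction. So $u$ has at most one zero, matching \eqref{a-estim}.

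For $M>0$, take consecutive zeros $x_1<x_2$ of $u$ and suppose $x_2-x_1<\pi/M$. Put $v(x)=\sin(M(x-x_1))$, which is positive on $(x_1,x_2]$. Assume $u>0$ on $(x_1,x_2)$, so $u'(x_1)>0$ and $u'(x_2)<0$. The Wronskian $W=u'v-uv'$ satisfies
\[
W'=u''v-uv''=(M^2-V)uv\ge0 .
\]
We have $W(x_1)=0$, since $u(x_1)=v(x_1)=0$, and $W(x_2)=u'(x_2)v(x_2)<0$. This contradicts the monotonicity of $W$. Therefore consecutive zeros are at least $\pi/M$ apart. If $u$ has $N$ zeros in $[A,B]$, then $(N-1)\pi/M\le B-A$, which is \eqref{a-estim}.

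For part (b), assume $m>0$; for $m=0$ the bound is trivial. Apply the same Wronskian argument with the roles reversed. On any interval $[\alpha,\alpha+\pi/m]\subset[A,B]$, suppose $u$ has no zero in the open interval $(\alpha,\alpha+\pi/m)$, say $u>0$ there. With $v(x)=\sin(m(x-\alpha))>0$ on that open interval, we get
\[
W'=(m^2-V)uv\le0 .
\]
Integrating over $[\alpha,\alpha+\pi/m]$ gives $W(\alpha+\pi/m)-W(\alpha)\le0$. At the endpoints $v=0$, $v'(\alpha)=m$ and $v'(\alpha+\pi/m)=-m$, so
\[
W(\alpha)=-mu(\alpha)\le0,\qquad W(\alpha+\pi/m)=mu(\alpha+\pi/m)\ge0 .
\]
Hence both vanish, so $u(\alpha)=u(\alpha+\pi/m)=0$, which forces $W$ to be constant and hence $(m^2-V)uv\equiv0$. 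In that degenerate case $u$ still vanishes at the endpoints. In every case, each closed interval of length $\pi/m$ in $[A,B]$ contains a zero of $u$.

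Now split $[A,B]$ into $\lfloor (B-A)m/\pi\rfloor$ disjoint half-open intervals of length $\pi/m$, plus a remainder. A zero on a shared endpoint would be counted twice, so we use every other interval. A cleaner route is to take consecutive zeros $y_1<y_2$ and observe from the above that $y_2-y_1\le\pi/m$. Also, the distance from $A$ to the first zero is at most $\pi/m$, and likewise from the last zero to $B$. This gives $B-A\le (N+1)\pi/m$, that is, $N\ge m(B-A)/\pi-1$, which is \eqref{b-estim}.

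The only step needing care is this endpoint bookkeeping in (b), and the consecutive-zero gap formulation is the way to handle it.
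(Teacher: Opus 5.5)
Your argument is correct, but it takes a different route from the paper's.

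The paper proves (a) with Wirtinger's inequality. Between consecutive zeros $a<b$ it writes $\pi^2(b-a)^{-2}\int_a^b|u|^2\le\int_a^b|u'|^2=\int_a^bV|u|^2\le M^2\int_a^b|u|^2$. This single energy chain handles $M=0$ (where it is immediately contradictory) and $M>0$ together, so no separate convexity argument is needed.

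For (b) the paper uses the Pr\"ufer angle $u=r\sin\theta$, $u'=mr\cos\theta$, for which $\theta'=m\cos^2\theta+(V/m)\sin^2\theta\ge m$. Since $\theta$ is increasing and the zeros of $u$ are exactly the points where $\theta\in\pi\mathbb Z$, the count is at least $(\theta(B)-\theta(A))/\pi-1\ge m(B-A)/\pi-1$. This needs no bookkeeping of gaps or of the first and last zeros.

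You instead use the Sturm comparison Wronskian $W=u'v-uv'$ against a sine in both parts. That is one fully elementary tool for (a) and (b), with no inequality or phase function to cite. The price is the endpoint accounting in (b). There you should state the case split explicitly:
\begin{itemize}
\item If $B-A<\pi/m$, the right-hand side $m(B-A)/\pi-1$ is negative and there is nothing to prove; this also covers the case where $u$ has no zeros.
\item If $B-A\ge\pi/m$, then $[A,A+\pi/m]\subset[A,B]$, so your bound $y_1-A\le\pi/m$ (and its mirror at $B$) applies.
\end{itemize}
The abandoned ``every other interval'' remark can simply be deleted.
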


\begin{proof}
First note that all zeros of $u$ are simple, since $u(a)=u'(a)=0$ implies
$u\equiv0$ by uniqueness.

(a)~Let $a<b$ be two consecutive zeros of $u$. By Wirtinger's inequality
\cite[Sec.~7.7]{HLP52},
\[
\frac{\pi^2}{(b-a)^2}\int_a^b|u|^2\le \int_a^b|u'|^2= \int_a^bV|u|^2 \le
M^2\int_a^b|u|^2.
\]
For $M=0$ this chain is impossible, so there can be at most one zero on
$[A,B]$. If $M>0$, the distance between consecutive zeros is at least
$\pi/M$. Hence, if $N$ is the number of zeros on $[A,B]$, then
\[
(N-1)\frac{\pi}{M}\le B-A,
\]
which gives \eqref{a-estim}.

(b)~For $m=0$, estimate \eqref{b-estim} is obvious. Let $m>0$. Introduce the
Pruefer angle \cite[Ch.~4, Sec.~5]{Ze05}
\[
u=r\sin\theta,\quad u'=mr\cos\theta.
\]
Then
\[
\theta'=m\cos^2\theta+\frac Vm\,\sin^2\theta\ge m.
\]
Since $u=r\sin\theta$, zeros of $u$ correspond to values
$\theta\in\pi\mathbb Z$. As $\theta$ is increasing, the number of such values
on $[A,B]$ is at least
\[
\frac{\theta(B)-\theta(A)}{\pi}-1
\ge \frac{m}{\pi}\,(B-A)-1.
\]
\end{proof}

We use
$(x\pm i0)^{-1}$ for the boundary values of $1/x$ in
$\mathcal S'(\mathbb R)$:
\[
(x\pm i0)^{-1}
=\lim_{\varepsilon\downarrow0}\frac1{x\pm i\varepsilon}
=\operatorname{p.v.}\frac1x\mp i\pi\delta_0.
\]
We shall use the following standard Fourier-transform formulas for
distributions (see \cite[Ch.~II]{Ge64}):
\begin{equation}\label{boundary-Fourier}
\widehat{(x+i0)^{-1}} =-2\pi i\,\mathbf 1_{(0,\infty)},\quad
\widehat{(x-i0)^{-1}} =2\pi i\,\mathbf 1_{(-\infty,0)},
\end{equation}
\[
\widehat{x^r}=2\pi i^r\delta_0^{(r)},\quad r\in \mathbb Z_{\ge 0}.
\]

\begin{lem}\label{lem-truncated-power}
Let $r,N\in\mathbb Z_{\ge 0}$ and
$t_+^r=\mathbf{1}_{(0,\infty)}(t)t^r$. Then, in
$\mathcal S'(\mathbb R)$,
\[
\partial_t^N t_+^r=
\begin{cases}
\dfrac{r!}{(r-N)!}\,t_+^{r-N},&0\le N\le r,\\[3mm] r!\,\delta_0^{(N-r-1)},&N\ge
r+1.
\end{cases}
\]
\end{lem}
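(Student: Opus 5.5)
Both cases follow by induction on $N$ from the base identity $\partial_t t_+^0=\delta_0$ and the product rule $\partial_t(t\,T)=T+t\,\partial_tT$. The base identity is the derivative of the Heaviside function: for $\psi\in\mathcal S(\mathbb R)$,
\[
\langle\partial_t \mathbf 1_{(0,\infty)},\psi\rangle=-\int_0^\infty\psi'(t)\,dt=\psi(0).
\]

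\emph{First case, $N\le r$.} I will show $\partial_t t_+^r=r\,t_+^{r-1}$ for $r\ge1$ and then iterate it $N$ times, which produces the factor $r(r-1)\cdots(r-N+1)=r!/(r-N)!$. For the single step, pair with a test function and integrate by parts:
\[
\langle\partial_t t_+^r,\psi\rangle=-\int_0^\infty t^r\psi'(t)\,dt=r\int_0^\infty t^{r-1}\psi(t)\,dt.
\]
The boundary term vanishes at $0$ because $r\ge1$, and at $\infty$ because $\psi$ decays rapidly. This gives the claim and covers $0\le N\le r$, with $N=0$ trivial.

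\emph{Second case, $N\ge r+1$.} I apply the first case with $N=r$ to get $\partial_t^r t_+^r=r!\,t_+^0=r!\,\mathbf 1_{(0,\infty)}$. One more derivative gives $r!\,\delta_0$ by the base identity. Applying the remaining $N-r-1$ derivatives yields $r!\,\delta_0^{(N-r-1)}$.

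The only point needing care is the vanishing of the boundary term at $t=0$ in the integration by parts. It holds exactly when $r\ge1$, and that is why the formula switches regimes at $N=r+1$. Everything else is routine.
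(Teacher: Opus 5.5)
Your proof is correct and follows the paper's argument: the paper also reduces the lemma to $\partial_t\mathbf 1_{(0,\infty)}=\delta_0$ and $\partial_t t_+^r=r\,t_+^{r-1}$ for $r\ge1$, then concludes by induction, though it cites Gel'fand--Shilov for these identities instead of integrating by parts explicitly. The product rule you mention at the outset is never used, since your integration by parts handles the inductive step directly.
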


\begin{proof}
We have \cite[Ch.~I, Sec.~3]{Ge64}
\[
\partial_t\mathbf{1}_{(0,\infty)}=\delta_0,\quad \partial_t t_+^r=r
t_+^{r-1},\quad r\ge1.
\]
It remains to use induction.
\end{proof}

\begin{lem}\label{lem-boundary-regularity}
Let $a_0$, $a_1$, $a_2$ be polynomials and let
$h\in C[-1,1]\cap C^4(-1,1)$ satisfy
\[
(1-x^2)h^{(4)}-4xh^{(3)} +a_2(x)h''+a_1(x)h'+a_0(x)h=0, \quad -1<x<1.
\]
If
\[
\int_{-1}^1h(x)e^{itx}\,dx=O(t^{-2}), \quad |t|\to\infty,
\]
then
\[
h(-1)=h(1)=0,
\]
and $h$ extends to an entire function.
\end{lem}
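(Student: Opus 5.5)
The idea is to read the equation as a Fuchsian-type ODE whose only finite singular points in a neighbourhood of $[-1,1]$ are $x=\pm1$. The leading coefficient $1-x^2$ vanishes only there, and the coefficients are polynomial, so every solution on $(-1,1)$ continues analytically along any path avoiding $\pm1$. The proof then has two parts. First, use the decay hypothesis $O(t^{-2})$ to rule out the singular branches at $\pm1$. Second, use the resulting analyticity at $\pm1$ to get an entire continuation.

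\textbf{Step 1: local Frobenius analysis at $x=1$ (and symmetrically at $x=-1$).} Write the equation near $x=1$ with $s=1-x$. The principal part is $(1-x^2)h^{(4)}-4xh^{(3)}\approx 2s\,h^{(4)}-4h^{(3)}$. In terms of $d/ds$ this becomes $2s\,\partial_s^4h+4\partial_s^3h$, up to signs. Substituting $h=s^\rho$ gives an indicial equation of the form $\rho(\rho-1)(\rho-2)\bigl(2(\rho-3)+4\bigr)=0$, i.e. $\rho\in\{0,1,2\}$ and $\rho=1$ from the last factor, or a similar small set; I would compute the signs carefully. The key point I expect is this. The equation has a regular singular point of "one-dimensional weight" type, so there are three holomorphic solutions plus one exceptional solution. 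The exceptional solution behaves either like $s^{\rho_0}$ with $\rho_0$ non-integer, or like a logarithmic term $s^k\log s$ with small $k$ (for instance $\log s$ or $s\log s$). Thus near $1$ we can write $h=h_{\rm hol}+c\,\psi$, where $h_{\rm hol}$ is analytic at $1$ and $\psi$ is the singular solution. The same decomposition holds at $-1$.

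\textbf{Step 2: asymptotics of the Fourier integral.} Split $\int_{-1}^1 h(x)e^{itx}\,dx$ with a smooth partition of unity into pieces near $\pm1$ and an interior piece. The interior piece is $O(t^{-\infty})$. For the endpoint pieces, integrate by parts on the analytic part. This produces boundary terms $\pm h(\pm1)e^{\pm it}/(it)$ plus $O(t^{-2})$. A singular piece like $s^{k}\log s$ or $s^{\rho_0}$ contributes a term of exact order $t^{-1-k}\log t$ or $t^{-1-\rho_0}$, with a nonzero constant times $e^{\pm it}$, by standard Erdélyi asymptotics. The contributions from $+1$ and $-1$ oscillate like $e^{it}$ and $e^{-it}$. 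They therefore cannot cancel each other along $t\to\infty$, because a combination $\alpha t^{-\beta}e^{it}+\gamma t^{-\beta'}e^{-it}$ is $o(t^{-\beta})$ only if its leading coefficients vanish. The $O(t^{-2})$ hypothesis then forces two things: $h(\pm1)=0$, and $c_\pm=0$ whenever the singular exponent is less than $1$ (or is logarithmic at level $t^{-1}$ or $t^{-2}\log t$).

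\textbf{Step 3: entire extension.} Once the singular coefficients vanish, $h$ is analytic in neighbourhoods of $\pm1$ as well as on $(-1,1)$. Then $h$ is an analytic solution near a segment containing $[-1,1]$. The only singularities of the equation are $\pm1$, where $h$ is now regular. Analytic continuation along paths in $\mathbb C\setminus\{\pm1\}$ then yields a single-valued function: monodromy around each point is trivial because $h$ is holomorphic in a full disc there. Since it is also bounded near those points, $h$ is entire.

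\textbf{Main obstacle.} The delicate step is the exact indicial computation at $\pm1$, i.e. showing that the unique non-analytic branch is singular enough to produce a term at least as large as $t^{-2}$ (e.g. $t^{-2}\log t$ or $t^{-3/2}$), so that the hypothesis kills it. I would first compute the indicial roots explicitly; I expect $\{0,1,2\}$ together with a double or log-producing root. I would then check the logarithmic coefficient by plugging in a Frobenius series to first order.
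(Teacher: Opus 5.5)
Your proposal is correct and follows essentially the same route as the paper. It uses Frobenius analysis at $\pm1$ with indicial polynomial $p(\rho)=\rho(\rho-1)^2(\rho-2)$, separates the $e^{it}$ and $e^{-it}$ endpoint contributions to force first $h(\pm1)=0$ and then the vanishing of the $t^{-2}\log t$ terms, and finishes by analytic continuation with trivial monodromy. The ``main obstacle'' you flag is already settled by your own indicial computation: a logarithmic branch $\log s\cdot w(s)+u(s)$, where $w$ is an analytic solution of exact order $\rho\in\{0,1,2\}$, leaves a term $p'(\rho)w_0s^{\rho-3}$ that the holomorphic $Lu$ cannot cancel unless $p'(\rho)=0$, i.e. $\rho=1$, so the exceptional solution is $s\log s\,v(s)+u(s)$ with $v(0)\ne0$, exactly the form the paper uses.
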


\begin{proof}
At the points $\pm1$, the equation has regular singularities with indicial
polynomial, up to a nonzero factor,
\[
r(r-1)^2(r-2).
\]
For example, set $s=1-x$ and apply the Frobenius method (see, e.g.,
\cite[Ch.~4]{CL55}). In a neighborhood of $x=1$ there are three linearly
independent analytic solutions, while the fourth has the form
\[
s\log s\,v(s)+u(s),\quad v(0)\ne0,
\]
where $u,v$ are analytic. The same holds near $x=-1$. Since a term
$s\log s\,v(s)$ with $v(0)\ne0$ gives a leading Fourier-transform term of
order $\log|t|/t^2$, as $t\to \infty$ we have
\[
\widehat h(t)= \frac{h(1)e^{it}-h(-1)e^{-it}}{it} +
\frac{\log|t|}{t^2}\,\bigl(C_+e^{it}+C_-e^{-it}\bigr) +O(t^{-2})
\]
with some constants $C_\pm$. Applying the condition
$\widehat h(t)=O(t^{-2})$ along the sequences $t=\pi n$ and
$t=\pi n+\pi/2$, $n\to\infty$, for which respectively
\[
e^{it}=e^{-it}=(-1)^n,\quad e^{it}=-e^{-it}=i(-1)^n,
\]
we obtain $h(1)=h(-1)=0$. Applying the same two sequences to the logarithmic
term gives $C_+=C_-=0$.

Thus the logarithmic terms are absent and $h$ is analytic at $x=\pm1$. Away
from these two points, every point of the complex plane is an ordinary point
of the equation because $a_0$, $a_1$, $a_2$ are polynomials. By the standard
theory of linear differential equations with analytic coefficients
\cite[Ch.~3, Sec.~7]{CL55}, together with the removability of the
singularities at $\pm1$ already established, $h$ extends to an entire
function.
\end{proof}

\begin{lem}\label{lem-periodic-averaging}
Let $f\in L^\infty(\mathbb R)$ be $2\pi$-periodic,
$\eta\in\mathcal S(\mathbb R)$, and $\widehat\eta(0)=1$. If
\[
\eta_R(x)=R^{-1}\eta(x/R),\quad R>0, 
\]
then
\[
\int_{\mathbb R}f(x)\eta_R(x)\,dx \to \langle
f\rangle=\frac1{2\pi}\int_0^{2\pi}f(x)\,dx, \quad R\to\infty.
\]
\end{lem}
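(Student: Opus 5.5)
The plan is to expand $f$ in its Fourier series and pass to the limit term by term, using the decay of $\widehat\eta$. Write $f(x)=\sum_{k\in\mathbb Z}c_ke^{ikx}$ with $c_0=\langle f\rangle$; the series converges in $\mathcal S'(\mathbb R)$, since $f$ is bounded and periodic and so $(c_k)$ is square summable. The scaled function $\eta_R$ lies in $\mathcal S(\mathbb R)$ and has $\widehat{\eta_R}(t)=\widehat\eta(Rt)$. Pairing the distributional series with $\eta_R$ gives
\[
\int_{\mathbb R}f(x)\eta_R(x)\,dx=\sum_{k\in\mathbb Z}c_k\,\widehat{\eta_R}(-k)
=\sum_{k\in\mathbb Z}c_k\,\widehat\eta(-Rk).
\]
This identity holds because the Fourier series converges in $\mathcal S'$ and $\int e^{ikx}\eta_R(x)\,dx=\widehat{\eta_R}(-k)$.

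The $k=0$ term equals $c_0\widehat\eta(0)=\langle f\rangle$. The remaining terms are controlled by the Schwartz decay $|\widehat\eta(s)|\le C(1+|s|)^{-2}$ together with $|c_k|\le\|f\|_\infty$. For $R\ge1$ this gives
\[
\Bigl|\sum_{k\ne0}c_k\widehat\eta(-Rk)\Bigr|\le C\|f\|_\infty\sum_{k\ne0}(1+R|k|)^{-2}\le C'\|f\|_\infty R^{-2}\to0 .
\]
This bound proves the lemma.

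The only delicate point is justifying that the series of pairings may be taken termwise. To avoid relying on $\mathcal S'$ convergence directly, I would instead use Poisson summation. Split
\[
\int_{\mathbb R}f\eta_R=\int_0^{2\pi}f(x)\sum_{n\in\mathbb Z}\eta_R(x+2\pi n)\,dx,
\]
which is legitimate because $\eta_R\in L^1$ and $f$ is bounded. Poisson summation gives
\[
\sum_{n\in\mathbb Z}\eta_R(x+2\pi n)=\frac1{2\pi}\sum_{k\in\mathbb Z}\widehat\eta(Rk)e^{ikx},
\]
and the right side converges absolutely and uniformly in $x$ because $\widehat\eta$ decays rapidly. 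Integrating against $f$ over one period then gives exactly the series above. This route is fully rigorous and elementary.
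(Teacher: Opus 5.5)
Your proof is correct, but it takes a different route from the paper. The paper avoids Fourier series. It subtracts the mean and observes that the primitive $F(x)=\int_0^x(f-\langle f\rangle)$ is bounded and $2\pi$-periodic. It then integrates by parts, $\int(f-\langle f\rangle)\eta_R=-\int F\eta_R'=O(R^{-1})$, since $\|\eta_R'\|_1=R^{-1}\|\eta'\|_1$. Together with $\int\eta_R=\widehat\eta(0)=1$ this finishes the proof. That argument is shorter and uses only $\eta,\eta'\in L^1$ with $\int\eta=1$.

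Your periodization plus Poisson summation is the rigorous form of your argument, so the $\mathcal S'$ heuristic can be dropped. It gives the exact identity $\int f\eta_R=\sum_k c_k\widehat\eta(-Rk)$, and hence an error $O(R^{-N})$ for every $N$, not just $O(R^{-1})$. It is sharper still where the lemma is actually used, in the $d=3$ phase argument. There $\supp\widehat\eta\subset(-1,1)$ and $R>1$, so $\widehat\eta(Rk)=0$ for all $k\ne0$, and $\int f\eta_R=\langle f\rangle$ holds exactly, with no limit needed. The price is that your route, as written, needs $\widehat\eta$ to be summable along the lattice $R\mathbb Z$. This is automatic for Schwartz $\eta$, but it is a stronger assumption on $\eta$ than the paper's integration by parts requires.
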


\begin{proof}
The function $f-\langle f\rangle$ has mean zero over a period, so its
primitive
\[
F(x)=\int_0^x\bigl(f(t)-\langle f\rangle\bigr)\,dt
\]
is bounded and $2\pi$-periodic. Integrating by parts, we get
\[
\int_{\mathbb R}(f(x)-\langle f\rangle)\eta_R(x)\,dx
=-\int_{\mathbb R}F(x)\eta_R'(x)\,dx
=O(R^{-1}),
\]
since $\|\eta_R'\|_1=R^{-1}\|\eta'\|_1$. As
$\int_{\mathbb R}\eta_R=\widehat\eta(0)=1$, the result follows.
\end{proof}

\section{Proof of Theorem \ref{thm-main}~(a)}\label{sec-functional}

We split the proof into short steps.

\subsection{The inner function $B$}
Define
\[
B(z)=\frac{\Phi(z)+i\Phi(-z)}{\Phi(-z)+i\Phi(z)}.
\]
By Lemma~\ref{lem-HB},
\[
|B(z)|<1,\quad \Im z>0,\quad |B(x)|=1,\quad x\in\mathbb R,
\]
so $B$ is a meromorphic inner function in the upper half-plane (see, e.g.,
\cite{Su25}).

Also, a direct computation using \eqref{varphi-Phi} gives
\begin{equation}\label{Re-B}
\Re B(x)=\frac{2\varphi(x)}{\Phi(x)^2+\Phi(-x)^2}.
\end{equation}

\subsection{The analytic signature $S$}
Set
\[
S(z)=\frac4\pi\,\arctan B(z),\quad \Im z>0.
\]
Since $|B|<1$, we use the analytic branch of the arctangent in the unit disk.
From
\[
\arctan w=\frac1{2i}\log\frac{1+iw}{1-iw},
\quad
\frac{1+iB(z)}{1-iB(z)}=\frac{i\Phi(z)}{\Phi(-z)}
\]
we obtain
\begin{equation}\label{S-log}
S(z)=\frac{2}{\pi i}\log\frac{i\Phi(z)}{\Phi(-z)}.
\end{equation}
The map $w\mapsto(4/\pi)\arctan w$ sends the unit disk onto the strip
$|\!\Re \zeta|<1$. If $|w|=1$ and $\Re w\ne0$, then
\[
\Re\frac4\pi\,\arctan w=\sign\Re w.
\]
Hence, on the real axis away from the zeros of $\varphi$, by \eqref{Re-B},
\begin{equation}\label{Re-S}
\Re S(x)=\sigma(x).
\end{equation}

Applying Lemma~\ref{lem-analytic-signal} to $F=S$ and using
\eqref{Re-S}, we obtain in $\mathcal S'(\mathbb R)$
\begin{equation}\label{S-spectrum}
\supp\widehat S\subset[0,\infty),\quad
\widehat S=2\widehat\sigma\quad\text{on }(0,\infty).
\end{equation}
At $0$, only a term $\gamma\delta_0$, $\gamma\in \mathbb{C}$, may occur.

\subsection{The polynomial $P_d$}
Now use the oddness of the dimension $d=2m+1$. Let
$\psi\in C_c^\infty((-1,1))$ be even and
\[
g(x)=\frac1{2\pi}\int_{\mathbb R}\psi(t)e^{ixt}\,dt.
\]
Then $g$ is even, has type less than~$1$, and decays rapidly. Also,
\[
g(0)=\frac1{2\pi}\int_{\mathbb R}\psi(t)\,dt,\quad \int_{\mathbb
R}x^{2m}\sigma(x)g(x)\,dx=
\frac1{2\pi}\,\bigl\langle\widehat{x^{2m}\sigma},\psi\bigr\rangle.
\]
Substituting $g$ into \eqref{extremality-1D} and using
$\omega_{d-1}=dV_d$ and $a_d=(2V_d\mathcal C_d)^{-1}$, we get
\[
\bigl\langle\widehat{x^{2m}\sigma},\psi\bigr\rangle=
\frac{4a_d}{d}\int_{\mathbb R}\psi(t)\,dt.
\]
The distribution $\widehat{x^{2m}\sigma}$ is even, so it is enough to test
against even functions, and
\begin{equation}\label{weighted-signature}
\widehat{x^{2m}\sigma}=\frac{4a_d}{d}\quad \text{on $(-1,1)$}.
\end{equation}
Thus, from
\[
\widehat{x^{2m}\sigma} =(-1)^m\,\partial_t^{2m}\widehat\sigma
\]
and the evenness of $\sigma$, it follows that on $(-1,1)$ the distribution
$2\widehat\sigma$ is an even polynomial of degree at most $d-1=2m$:
\begin{equation}\label{def-Pd}
2\widehat\sigma(t)=P_d(t)=\sum_{n=0}^m p_nt^{2n}.
\end{equation}
Moreover, \eqref{weighted-signature} gives
\begin{equation}\label{Pd-top}
P_d^{(d-1)}(t)=(-1)^m\,\frac{8a_d}{d}.
\end{equation}

Together with \eqref{S-spectrum}, this yields
\[
\widehat S=P_d\quad \text{on $(0,1)$}.
\]

\subsection{The function $A$ and the polynomials $Q_d$, $R_d$}
Differentiating \eqref{S-log}, define the even meromorphic function
\begin{equation}\label{def-A}
A(z)=\frac{\pi i}{4}\,S'(z) =\frac12\Bigl(\frac{\Phi'(z)}{\Phi(z)}+
\frac{\Phi'(-z)}{\Phi(-z)}\Bigr).
\end{equation}
Its poles are simple and
\begin{equation}\label{A-residue}
A(z)=\sum_{k=1}^\infty(-1)^{k-1}\,\frac{\tau_k}{z^2-\tau_k^2},\quad
\res_{z=\tau_k}A(z)=\frac{(-1)^{k-1}}2.
\end{equation}
Here the series converges locally uniformly away from the poles: its leading
term as $k\to\infty$ is $(-1)^k/\tau_k$, while the remainder is
$O_K(\tau_k^{-3})$ on every compact set $K\subset \mathbb{C}$.

Using \eqref{def-Pd}, define
\begin{equation}\label{Lambda-def}
\Lambda_{2n+1}=\frac18\,(-1)^n(2n+1)!\,p_n,\quad 0\le n\le m,
\end{equation}
The interpretation of the coefficients $\Lambda_{2n+1}$ in terms of
regularized power sums over the zeros will be given in
Subsection~\ref{sec-fp-zeta}.

Define the even real polynomial
\begin{equation}\label{def-Qd}
Q_d(z)=\sum_{n=0}^{m-1}\Lambda_{2n+1}z^{2(m-1-n)},
\end{equation}
with the convention $Q_1=0$, and set
\begin{equation}\label{def-Rd}
R_d(z)=a_d+z^2Q_d(z).
\end{equation}

\subsection{Spectral gap for $\Theta_d$}\label{subsec-Theta-d}
Consider
\[
\Theta_d(z)=z^dA(z)-zQ_d(z)-\frac{a_d}{z}.
\]
Its boundary values are
\[
\Theta_d^\pm(x)= x^dA^\pm(x)-xQ_d(x)-\frac{a_d}{x\pm i0}.
\]

We now establish the spectral gap
\begin{equation}\label{Theta-gap}
\supp\widehat{\Theta_d^+}\subset[1,\infty),\quad
\supp\widehat{\Theta_d^-}\subset(-\infty,-1].
\end{equation}
From \eqref{S-spectrum} and \eqref{def-Pd}, we may write
\[
\widehat S(t)=t_+^0P_d(t)+\gamma\delta_0+T(t),\quad \supp T\subset[1,\infty).
\]
The term $\gamma\delta_0$ disappears after multiplying $\widehat S$ by $t$,
since $t\delta_0=0$. For the polynomial part we have the exact decomposition
\begin{equation}\label{spectral-polynomial-part}
\widehat{x^dA^+}(t)= \frac{\pi i}{4}\,i^{d+1} \sum_{n=0}^m p_n\,\partial_t^d
t_+^{2n+1}+T_1(t), \quad \supp T_1\subset[1,\infty).
\end{equation}
Indeed, $T_1=(\pi i/4)i^{d+1}\,\partial_t^d(tT)$, so differentiation does not
move its support to the left. For $n=m$, Lemma~\ref{lem-truncated-power} and
\eqref{Pd-top} give
\[
\frac{\pi i}{4}\,i^{d+1}p_m d!\,\mathbf{1}_{(0,\infty)}= -2\pi
ia_d\,\mathbf{1}_{(0,\infty)}.
\]
This term cancels the Fourier transform of $-a_d/(x+i0)$ by
\eqref{boundary-Fourier}.

Now let $n<m$ and $r=2(m-n)-1$. The corresponding term in
\eqref{spectral-polynomial-part} supported at the origin is
\[
\frac{\pi i}{4}\,(-1)^{m+1}(2n+1)!\,p_n\delta_0^{(r)}.
\]
The coefficient of $z^r$ in $zQ_d(z)$ is
\[
q_n=\frac18\,(-1)^n(2n+1)!\,p_n.
\]
Therefore,
\[
\widehat{-q_nx^r}= -2\pi q_ni^r\delta_0^{(r)}= -\frac{\pi
i}{4}\,(-1)^{m+1}(2n+1)!\,p_n\delta_0^{(r)},
\]
and these terms cancel. The remainder in \eqref{spectral-polynomial-part} is
supported in $[1,\infty)$. Hence,
$\supp\widehat{\Theta_d^+}\subset[1,\infty)$. The second inclusion in
\eqref{Theta-gap} follows by complex conjugation and reflection $t\mapsto-t$.

\subsection{The polynomial defect $H_d$}
Introduce the defect
\begin{equation}\label{def-Hd}
H_d(z)=\varphi(z)\Theta_d(z)+\frac{a_d}{z}.
\end{equation}
The poles of $\Theta_d$ at $\pm\tau_k$ are canceled by the simple zeros of
$\varphi$. At the origin,
$\varphi(z)=1+O(z^2)$ and $\Theta_d(z)=-a_d/z+O(z)$, so this singularity is
also removable. Hence $H_d$ is entire and odd.

Since $|x|^{d-1}\varphi(x)\in L^1(\mathbb{R})$ and $\varphi$ is bounded on
$\mathbb{R}$, we have $\varphi\in\PW_1^1(\mathbb R)$, and therefore
$\supp\widehat\varphi\subset[-1,1]$. By Lemma~\ref{lem-multiplier},
\eqref{Theta-gap}, and \eqref{boundary-Fourier},
\[
\supp\widehat{H_d^+}\subset[0,\infty),\quad
\supp\widehat{H_d^-}\subset(-\infty,0].
\]
The two boundary values coincide, as distributions, with the restriction to
$\mathbb R$ of the same entire function $H_d$. Hence,
\[
\supp\widehat H_d\subset[0,\infty)\cap(-\infty,0]=\{0\}.
\]
A tempered distribution with such spectrum is a polynomial. Therefore the
entire function $H_d$ is also a polynomial, and by oddness
\[
H_d(z)=z\Pi_d(z)
\]
for some even polynomial $\Pi_d$.

\subsection{Vanishing of $H_d$}
Letting $z\to\tau_k$ in \eqref{def-Hd} and using the residue
\eqref{A-residue}, we obtain
\begin{equation}\label{Pi-at-zero}
\tau_k^{d-1}\varphi'(\tau_k)
=2(-1)^{k-1}\Bigl(\Pi_d(\tau_k)-\frac{a_d}{\tau_k^2}\Bigr).
\end{equation}
The function $g(z)=z^{d-1}\varphi(z)$ belongs to
$\PW_1^1(\mathbb R)$ and
$g'(\tau_k)=\tau_k^{d-1}\varphi'(\tau_k)$. Since $\tau_k\to\infty$, we can
choose a subsequence $\tau_{k_j}$ such that
$\tau_{k_{j+1}}-\tau_{k_j}\ge1$. Then Lemma~\ref{lem-PP} gives
\[
\sum_j|g'(\tau_{k_j})|<\infty.
\]

On the other hand, if $\Pi_d\not\equiv0$, then
$|\Pi_d(x)-a_d/x^2|\ge C$ for all sufficiently large $x$. Hence the right-hand
side of \eqref{Pi-at-zero} cannot be absolutely summable along
$\tau_{k_j}$. Therefore, $\Pi_d\equiv0$.

Thus,
\begin{equation}\label{phi-Theta}
\varphi(z)\Theta_d(z)=-\frac{a_d}{z}.
\end{equation}

\subsection{Final functional equation}
Using \eqref{def-A} and \eqref{def-Rd}, rewrite \eqref{phi-Theta} as
\begin{equation}\label{quadratic-FE}
z^{d+1}\bigl(\Phi'(z)\Phi(-z)+\Phi'(-z)\Phi(z)\bigr)
-2R_d(z)\Phi(z)\Phi(-z)=-2a_d.
\end{equation}
This is the quadratic functional equation \eqref{intro-FE}.

\section{Proof of Theorem \ref{thm-main}~(b)}%\label{sec-ode}

Again, we split the proof into short steps.

\subsection{Relation at the zeros of $\Phi$}
Differentiate \eqref{quadratic-FE} and set $z=s$, where $\Phi(s)=0$. The mixed
terms produced by differentiation cancel, and all terms containing $\Phi(s)$
vanish. Since the zeros of $\Phi(z)$ and $\Phi(-z)$ are disjoint,
$\Phi(-s)\ne0$. Hence,
\begin{equation}\label{zero-relation}
s^{d+1}\Phi''(s)
+\bigl((d+1)s^d-2R_d(s)\bigr)\Phi'(s)=0.
\end{equation}

\subsection{The polynomial $\mathcal P_d$}
Define
\[
\mathcal D_df= z^{d+1}f''+\bigl((d+1)z^d-2R_d(z)\bigr)f',\quad \mathcal
P_d(z)=-\frac{\mathcal D_d\Phi(z)}{\Phi(z)}.
\]
By \eqref{zero-relation}, all possible poles at the simple zeros of $\Phi$ are
removable. Hence $\mathcal P_d$ is entire. Lemma~\ref{lem-logder} shows that
$\mathcal P_d$ is a polynomial. Since $\Phi$ and $R_d$ are real-valued on
$\mathbb R$, this polynomial has real coefficients.

\subsection{Equation for $\widetilde{\Phi}$ and the polynomial $K_d$}
Write
\[
R_d(z)=\sum_{j=0}^m r_jz^{2j},\quad r_0=a_d,
\]
and define the odd rational function
\[
h_d(z)=-\sum_{j=0}^m\frac{r_j}{d-2j}\,z^{-(d-2j)}.
\]
Then $h_d'(z)=R_d(z)/z^{d+1}$. In the punctured plane, define the modified
function $\widetilde{\Phi}$ by
\[
\widetilde{\Phi}(z)=e^{-h_d(z)}\Phi(z).
\]
The functional equation \eqref{quadratic-FE} immediately gives
\[
\widetilde{\Phi}'(z)\widetilde{\Phi}(-z)+
\widetilde{\Phi}'(-z)\widetilde{\Phi}(z)= -\frac{2a_d}{z^{d+1}}.
\]
If we set $Y_1(z)=\widetilde{\Phi}(z)$ and $Y_2(z)=\widetilde{\Phi}(-z)$, then,
with the convention $W(Y_1,Y_2)=Y_1Y_2'-Y_1'Y_2$ for the Wronskian,
\[
W(Y_1,Y_2)=\frac{2a_d}{z^{d+1}}\ne0.
\]
Thus $Y_1$, $Y_2$ form a fundamental system for a unique monic second-order
equation. Abel's formula gives the coefficient $(d+1)/z$ at the first
derivative. Reflection $z\mapsto-z$ interchanges the fundamental solutions,
so the second coefficient is even. Denote it by $q_d(z)$. Then $q_d$ is
holomorphic in $\mathbb C\setminus\{0\}$ and $\widetilde{\Phi}$ satisfies
\begin{equation}\label{gauge-ODE}
\widetilde{\Phi}''(z)+\frac{d+1}{z}\,\widetilde{\Phi}'(z)+
q_d(z)\widetilde{\Phi}(z)=0.
\end{equation}

Introduce the polynomial
\begin{equation}\label{def-Kd}
K_d(z)=\mathcal P_d(z)+R_d'(z).
\end{equation}
Substituting $\Phi=e^{h_d}\widetilde{\Phi}$ into
$\mathcal D_d\Phi+\mathcal P_d\Phi=0$ gives
\begin{equation}\label{q-before-degree}
q_d(z)=\frac{K_d(z)}{z^{d+1}}
-\frac{R_d(z)^2}{z^{2d+2}}.
\end{equation}
Since $q_d$, $R_d$, and $z^{d+1}$ are even, the polynomial $K_d$ is even.

Since $\deg R_d'\le d-2$, the polynomials $K_d$ and $\mathcal P_d$ have the
same degree and the same leading coefficient. In the next steps we determine
the degree and leading coefficient of $K_d$, and then find its remaining
coefficients.

\subsection{Liouville transform and zero density}
Apply the Liouville transform directly to \eqref{gauge-ODE} on the positive
half-line. Since the coefficient at $\widetilde{\Phi}'$ is $(d+1)/x$, up to a
constant factor,
\begin{equation}\label{def-u}
u(x)=x^{(d+1)/2}\widetilde{\Phi}(x)
=x^{(d+1)/2}e^{-h_d(x)}\Phi(x).
\end{equation}
By Lemma~\ref{lem-Liouville},
\[
u''(x)+V(x)u(x)=0,\quad x>0,
\]
where
\begin{equation}\label{V-formula}
V(x)=q_d(x)-\frac{d^2-1}{4x^2}
=\frac{K_d(x)}{x^{d+1}}
-\frac{R_d(x)^2}{x^{2d+2}}
-\frac{d^2-1}{4x^2}.
\end{equation}
Since $\deg R_d\le d-1$, \eqref{V-formula} gives
\begin{equation}\label{V-asymp-K}
V(x)=\frac{K_d(x)}{x^{d+1}}+O(x^{-2}),\quad x\to+\infty.
\end{equation}
The factor in \eqref{def-u} is positive for $x>0$, so the positive zeros of
$u$ are exactly the positive zeros of $\Phi$, namely
$\tau_1,\tau_3,\tau_5,\ldots.$ Hence, by \eqref{tau-props},
\begin{equation}\label{u-zero-density}
n_u(X)=\frac{X}{2\pi}+o(X),\quad X\to\infty.
\end{equation}

\subsection{Lower bound for the degree of $K_d$}
We first prove
\begin{equation}\label{degree-lower}
\deg K_d\ge d+1.
\end{equation}
If $K_d\equiv0$ or $\deg K_d\le d$, then \eqref{V-asymp-K} gives
$V(x)\to0$. For every $\varepsilon>0$, we have $V(x)\le\varepsilon$ for all
sufficiently large $x$. Lemma~\ref{lem-zero-count}~(a) gives
\[
\limsup_{X\to\infty}\frac{n_u(X)}X\le\frac{\sqrt\varepsilon}{\pi}.
\]
Letting $\varepsilon\to0$ contradicts \eqref{u-zero-density}.

\subsection{Degree of the polynomial $K_d$}
Let $N=\deg K_d$, let $c_N\ne0$ be the leading coefficient of $K_d$, and set
$\kappa=N-d-1$. By \eqref{degree-lower}, $\kappa\ge0$ and
\[
V(x)=c_Nx^\kappa(1+o(1)),\quad x\to+\infty.
\]
If $c_N<0$, then $V(x)<0$ for all sufficiently large $x$, and
Lemma~\ref{lem-zero-count}~(a) shows that there can be at most one zero there,
contradicting \eqref{u-zero-density}. Thus $c_N>0$.

If $\kappa>0$, then on $[X,2X]$, for large $X$,
\[
V(x)\ge\frac{c_N}{2}\,X^\kappa.
\]
By Lemma~\ref{lem-zero-count}~(b),
\[
n_u(2X)-n_u(X)\ge \frac1\pi\,\sqrt{\frac{c_N}{2}}\,X^{1+\kappa/2}-2,
\]
which grows superlinearly and contradicts \eqref{u-zero-density}. Therefore
$\kappa=0$ and
\begin{equation}\label{degree-exact}
\deg K_d=d+1.
\end{equation}

\subsection{Leading coefficient of $K_d$}
Let $c>0$ be the leading coefficient of $K_d$. By \eqref{degree-exact} and
\eqref{V-asymp-K},
\[
V(x)=c+o(1),\quad x\to+\infty.
\]
For every $0<\varepsilon<c$ and all sufficiently large $x$,
\[
c-\varepsilon\le V(x)\le c+\varepsilon.
\]
Lemma~\ref{lem-zero-count} gives
\[
\frac{\sqrt{c-\varepsilon}}\pi
\le\liminf_{X\to\infty}\frac{n_u(X)}X
\le\limsup_{X\to\infty}\frac{n_u(X)}X
\le\frac{\sqrt{c+\varepsilon}}\pi.
\]
Using \eqref{u-zero-density} and then letting $\varepsilon\to0$, we get
\[
\frac{\sqrt c}{\pi}=\frac1{2\pi}.
\]
Hence,
\[
c=\frac14.
\]

\subsection{Remaining coefficients of $K_d$}
Thus, $K_d$ is an even polynomial of degree $d+1$ with leading coefficient
$1/4$. It remains to determine the remaining coefficients of $K_d$.

In the disk $|z|<\tau_1$, expand
\[
A(z)=\sum_{n=0}^\infty\ell_{2n+1}z^{2n},\quad
\ell_{2n+1}=\sum_{k=1}^\infty\frac{(-1)^k}{\tau_k^{2n+1}}.
\]
For $n=0$ the series converges by the Leibniz criterion, and for $n\ge1$ it
converges absolutely.

Let $L=\Phi'/\Phi$. From \eqref{def-A},
\[
\frac{L(z)+L(-z)}2=A(z).
\]
On the other hand,
\[
K_d=-z^{d+1}\,\frac{\Phi''}{\Phi}- (d+1)z^dL+2R_dL+R_d'.
\]
The first two terms do not contribute to even powers of degree at most $d-1$,
and $R_d'$ is odd. Therefore these coefficients coincide with the
corresponding coefficients of $2R_dA$. Hence,
\begin{equation}\label{Kd-formula}
K_d(z)=\frac14\,z^{d+1}+
2\sum_{n=0}^m\biggl(\sum_{j=0}^nr_j\ell_{2(n-j)+1}\biggr)z^{2n}.
\end{equation}
Also, directly from the definition of $Q_d$,
\[
r_j=\frac18\,(-1)^{m-j}\bigl(2(m-j)+1\bigr)!\,p_{m-j}, \quad 1\le j\le m.
\]
For $j=0$, the same formula follows from \eqref{Pd-top}. Thus, to construct
the equation, it is enough to know the finite sets
$p_0,\ldots,p_m$ and $\ell_1,\ell_3,\ldots,\ell_d$.

\subsection{Final differential equation}
Substituting $\mathcal P_d=K_d-R_d'$ into
$\mathcal D_d\Phi+\mathcal P_d\Phi=0$, we obtain
\begin{equation}\label{explicit-ODE}
z^{d+1}\Phi''(z)
+\bigl((d+1)z^d-2R_d(z)\bigr)\Phi'(z)
+\bigl(K_d(z)-R_d'(z)\bigr)\Phi(z)=0.
\end{equation}
This is the required differential equation \eqref{intro-ODE}.

\section{Proof of Theorem \ref{thm-main}~(c)}\label{sec-Phi}

By part~\textup{(b)} already proved,
\[
K_d(x)=\frac14\,x^{d+1}+O(x^{d-1}),
\]
and $\deg R_d\le d-1$. Hence \eqref{V-formula} gives
\[
V(x)=\frac14+O(x^{-2}),\quad x\to+\infty.
\]
For a solution of $u''+Vu=0$, introduce the energy function
\[
\mathcal E(x)=u'(x)^2+\frac14\,u(x)^2.
\]
Then
\[
\mathcal E'(x)=-2\Bigl(V(x)-\frac14\Bigr)u(x)u'(x),
\]
and for all sufficiently large $x$,
\begin{equation}\label{energy}
|\mathcal E'(x)|\le Cx^{-2}\mathcal E(x).
\end{equation}
Hence $u(x),u'(x)=O(1)$. Applying the same argument to the second solution
corresponding to $\Phi(-x)$ gives
\[
\Phi(x)=O\bigl(|x|^{-(d+1)/2}\bigr),\quad |x|\to\infty.
\]
Since, by Lemma \ref{lem-Phi}, $\Phi$ has finite exponential type, the
Paley--Wiener--Schwartz theorem implies that $\widehat{\Phi}$ has compact
support.

Apply the Fourier transform to the differential equation
\eqref{explicit-ODE}. Up to a nonzero constant factor, the coefficient of the
highest derivative $\partial_t^{d+1}\widehat\Phi$ is $1/4-t^2$. Therefore, on
each of the intervals $(-\infty,-1/2)$ and $(1/2,\infty)$, the distribution
$\widehat\Phi$ is a solution of a regular linear differential equation and
hence is represented there by a smooth function. Since $\widehat\Phi$ has
compact support, uniqueness of solutions gives
\begin{equation}\label{Phi-spectrum}
\supp\widehat\Phi\subset[-1/2,1/2].
\end{equation}
Again by the Paley--Wiener--Schwartz theorem, \eqref{Phi-spectrum} implies
that the exponential type of $\Phi$ is at most $1/2$. On the other hand,
$\Phi(z)$ and $\Phi(-z)$ have the same type, while
$\varphi(z)=\Phi(z)\Phi(-z)$ has exact exponential type $1$. Therefore the
type of $\Phi$ is at least $1/2$, and hence equals $1/2$.

\section{The case $d=1$}\label{sec-d1}

If $d=1$, then
\[
m=0,\quad V_1=2,\quad a_1=\frac1{4\mathcal C_1},\quad Q_1=0,\quad R_1=a_1.
\]
The functional equation \eqref{quadratic-FE} becomes
\begin{equation}\label{func-eq}
z^2\bigl(\Phi'(z)\Phi(-z)+\Phi'(-z)\Phi(z)\bigr)
-\frac1{2\mathcal C_1}\,\Phi(z)\Phi(-z)
=-\frac1{2\mathcal C_1}.
\end{equation}

Set $\ell_1=\Phi'(0)$. Formula \eqref{Kd-formula} gives
\[
K_1(z)=\frac14\,z^2+2a_1\ell_1.
\]
Hence,
\begin{equation}\label{diff-eq}
z^2\Phi''(z)+\Bigl(2z-\frac1{2\mathcal C_1}\Bigr)\Phi'(z)
+\Bigl(\frac{z^2}{4}+\frac{\ell_1}{2\mathcal C_1}\Bigr)\Phi(z)=0.
\end{equation}

After changing the normalization from type~$1$ to type~$\pi$, equations
\eqref{func-eq} and \eqref{diff-eq} coincide with equations (3.3) and (1.5),
respectively, in \cite{Bo25}.

We note that \cite{Bo25} also contains a substantially different functional
equation. For $d=1$,
\[
h_1(z)=-\frac1{4\mathcal C_1z},\quad \widetilde{\Phi}(z)=e^{1/(4\mathcal
C_1z)}\Phi(z),
\]
and this equation takes the form
\begin{equation}\label{d1-inversion}
\widetilde{\Phi}(z)=
\frac{
e^{-i\pi/4}\widetilde{\Phi}\bigl((2i\mathcal C_1z)^{-1}\bigr)
+
e^{i\pi/4}\widetilde{\Phi}\bigl(-(2i\mathcal C_1z)^{-1}\bigr)}
{2\sqrt{\mathcal C_1}\,z},
\quad z\ne0.
\end{equation}
It corresponds to equation (1.6) in \cite{Bo25} and comes from the inversion
symmetry of the differential equation for $\widetilde{\Phi}$.

For $d\ge3$, such a symmetry is no longer apparent. Indeed, the equation for
$\widetilde{\Phi}$ is
\[
\widetilde{\Phi}''+\frac{d+1}{z}\,\widetilde{\Phi}'+q_d(z)\widetilde{\Phi}=0,\quad
q_d(z)=\frac{K_d(z)}{z^{d+1}}- \frac{R_d(z)^2}{z^{2d+2}}.
\]
The transformation $\widetilde{\Phi}(z)\mapsto z^{-d}\widetilde{\Phi}(c/z)$
would preserve this equation only if
\[
q_d(z)=\frac{c^2}{z^4}\,q_d(c/z).
\]
For $d=1$, this condition is satisfied for a suitable choice of $c$, and this
is precisely what underlies \eqref{d1-inversion}. However, for $d\ge3$, the function $q_d$ contains the
nonzero term $-a_d^2z^{-2d-2}$, which after inversion produces the positive
power $z^{2d-2}$.

\section{The case $d=3$: differential equation and numerical
algorithm}\label{sec-d3}

\subsection{Differential equation}
For $d=3$,
\[
P_3(t)=p_0+p_1t^2,\quad p_1=-\frac{4a_3}{3},\quad
Q_3(z)=\frac{p_0}{8}=\Lambda_1.
\]
Therefore,
\begin{equation}\label{d3-FE}
z^4\bigl(\Phi'(z)\Phi(-z)+\Phi'(-z)\Phi(z)\bigr)
-2(a_3+\Lambda_1z^2)\Phi(z)\Phi(-z)=-2a_3.
\end{equation}

Let
\[
\Phi(z)=1+c_1z+c_2z^2+c_3z^3+\ldots.
\]
Comparing the coefficients of $z^2$ in \eqref{d3-FE} gives
\begin{equation}\label{d3-c2}
2c_2-c_1^2=-\frac{\Lambda_1}{a_3}.
\end{equation}
Also,
\[
\ell_1=c_1,\quad \ell_3=3c_3-3c_1c_2+c_1^3.
\]
By \eqref{Kd-formula},
\[
K_3(z)=\frac14\,z^4+\beta_2z^2+2a_3c_1,\quad
\beta_2=2(a_3\ell_3+\Lambda_1\ell_1).
\]
Using \eqref{d3-c2},
\[
\beta_2=6a_3c_3+4\Lambda_1c_1-2a_3c_1c_2.
\]
Hence,
\begin{equation}\label{d3-ODE}
z^4\Phi''(z)+ (4z^3-2\Lambda_1z^2-2a_3)\Phi'(z)
+\Bigl(\frac14\,z^4+\beta_2z^2-2\Lambda_1z+2a_3c_1\Bigr)\Phi(z)=0.
\end{equation}
For the differential equation \eqref{gauge-ODE}, the coefficient $q_3$ has
the form
\[
q_3(z)=\frac14+\frac{\beta_2}{z^2}
+\frac{2a_3c_1-\Lambda_1^2}{z^4}
-\frac{2a_3\Lambda_1}{z^6}-\frac{a_3^2}{z^8}.
\]

Thus the problem is reduced to determining four parameters,
\[
a_3,\quad \Lambda_1,\quad \beta_2,\quad c_1.
\]
Once $a_3$ is known, \eqref{def-ad} gives
\[
\mathcal C_3=\frac{3}{8\pi a_3}.
\]

In the one-dimensional case, after the Fourier transform the differential
equation in~\cite{Bo25} leads to a tridiagonal spectral problem in the basis of
Legendre polynomials and hence to a high-precision numerical algorithm. For
$d=3$ an analogous scheme appears: the transformed equation has order four,
and the corresponding matrix becomes seven-diagonal.

\subsection{Spectral boundary-value problem}
For $d=3$, Theorem~\ref{thm-main}~\textup{(c)} gives
\[
\Phi(x)=O(|x|^{-2}),\quad |x|\to \infty,
\]
so $\Phi\in\PW_{1/2}^{1}(\mathbb R)$. In particular,
$\widehat\Phi\in C_0(\mathbb R)$ and
$\supp\widehat\Phi\subset[-1/2,1/2]$.

Set
\[
h(x)=\widehat\Phi(x/2),\quad x\in[-1,1].
\]
Then $h$ is continuous,
\begin{equation}\label{d3-dirichlet}
h(-1)=h(1)=0,
\end{equation}
and, since $\Phi$ is real-valued, $h$ is Hermitian:
\begin{equation}\label{d3-reality}
h(-x)=\overline{h(x)}.
\end{equation}

Inside $(-1,1)$, the Fourier transform of equation~\eqref{d3-ODE}, written in
terms of $h$, gives
\begin{equation}\label{d3-fourier-ODE}
(1-x^2)h^{(4)}-4xh^{(3)}-\beta_2h''
+i\Lambda_1(xh''+h')
-\frac{ia_3}{4}\,xh+\frac{a_3c_1}{2}\,h=0.
\end{equation}
Since this equation is regular on $(-1,1)$, we have $h\in C^4(-1,1)$. By the
Fourier inversion formula,
\[
\int_{-1}^1 h(x)e^{itx}\,dx
=4\pi\Phi(2t)=O(t^{-2}),\quad t\to\infty.
\]
Hence, by Lemma~\ref{lem-boundary-regularity}, $h$ extends to an entire
function. In particular, all boundary derivatives $h^{(k)}(\pm1)$ exist.

Now return to the Fourier transform of equation~\eqref{d3-ODE} on the whole
real line in the sense of distributions, extending $h$ by zero outside
$[-1,1]$. Differentiation produces boundary $\delta$-terms at $s=\pm1$.
Using
\begin{align*}
(1-x^2)\delta_s'&= 2s\delta_s,\\
(1-x^2)\delta_s''&= 4s\delta_s'-2\delta_s,\\
(1-x^2)\delta_s'''&= 6s\delta_s''-6\delta_s',
\end{align*}
we find that the boundary part of the left-hand side of
\eqref{d3-fourier-ODE} at $x=s$ is, up to a common nonzero factor,
\[
2s h(s)\delta_s''
+(2-\beta_2+i\Lambda_1s)h(s)\delta_s'
+\bigl(-2s h''(s)+(2-\beta_2+i\Lambda_1s)h'(s)\bigr)\delta_s.
\]
Since $h(s)=0$, the first two terms vanish, and setting the coefficient of
$\delta_s$ equal to zero gives
\[
-2s h''(s)+(2-\beta_2+i\Lambda_1s)h'(s)=0,\quad s=\pm1.
\]

For the solution of the spectral boundary-value problem corresponding to the
extremal function, there is an additional phase condition
\begin{equation}\label{d3-phase}
h'(1)=-ih'(-1).
\end{equation}
Indeed, set $A=h'(1)$. Then \eqref{d3-reality} gives
$h'(-1)=-\overline A$. If $A=0$, then \eqref{d3-phase} is immediate. Thus,
assume $A\ne0$.

From \eqref{d3-dirichlet}, the Fourier inversion formula, and integration by
parts, we obtain
\begin{equation}\label{d3-Phi-asymptotic}
\Phi(z)=
\frac{h'(1)e^{iz/2}-h'(-1)e^{-iz/2}}{\pi z^2}
+O\bigl(|z|^{-3}e^{|\!\Im z|/2}\bigr),
\quad |z|\to\infty.
\end{equation}
Hence, by \eqref{varphi-Phi},
\begin{equation}\label{varphi-cos}
x^4\varphi(x) =\frac{2|A|^2}{\pi^2}\bigl(\cos x+q\bigr)+O(x^{-1}), \quad
q=\frac{\Re A^2}{|A|^2},
\end{equation}
where $|q|\le 1$.

We show that $q=0$. Choose an even nonnegative function
$\eta\in\mathcal S(\mathbb R)$ such that $\supp\widehat\eta\subset(-1,1)$ and
$\widehat\eta(0)=1$, and set $\eta_R(x)=R^{-1}\eta(x/R)$ for $R>1$. Then
\[
\widehat{\eta_R}(\xi)=\widehat\eta(R\xi), \quad
\supp\widehat{\eta_R}\subset(-1/R,1/R).
\]
Using \eqref{sigma-P}, we obtain
\begin{equation}\label{sigma-0}
\int_{\mathbb R}\sigma(x)\eta_R(x)\,dx =\frac1{4\pi}\int_{\mathbb R}
P_3(u)\widehat{\eta_R}(u)\,du =\frac1{4\pi R}\int_{\mathbb R}
P_3(u/R)\widehat\eta(u)\,du =O(R^{-1}).
\end{equation}

Set $g(x)=\cos x+q$ and, for small $\varepsilon>0$, define the
$2\pi$-periodic set
\[
E_\varepsilon= \{x\in\mathbb R\colon |g(x)|\le\varepsilon\}.
\]
By \eqref{varphi-cos}, there exists $X_\varepsilon>0$ such that
\[
\sigma(x)=\sign g(x),\quad |x|>X_\varepsilon,\quad x\notin E_\varepsilon.
\]
Therefore,
\[
\biggl|\int_{\mathbb R}\bigl(\sigma(x)-\sign g(x)\bigr)\eta_R(x)\,dx\biggr|\le
2\int_{|x|\le X_\varepsilon}\eta_R(x)\,dx+ 2\int_{E_\varepsilon}\eta_R(x)\,dx.
\]
The first term tends to zero as $R\to\infty$, while the second equals
$2\int_{\mathbb R}\mathbf 1_{E_\varepsilon}\eta_R$. Thus, using
\eqref{sigma-0} and applying Lemma~\ref{lem-periodic-averaging} to
$\sign g$ and $\mathbf 1_{E_\varepsilon}$, we get
\[
\biggl|\frac1{2\pi}\int_0^{2\pi}\sign(\cos x+q)\,dx\biggr|\le
2\,\frac1{2\pi}\int_0^{2\pi}\mathbf 1_{E_\varepsilon}(x)\,dx
\]
or
\[
|2\arcsin q|\le |E_\varepsilon\cap[0,2\pi]|.
\]
Letting $\varepsilon\to0$, we get $q=0$, because the zero set of $g$ on
$[0,2\pi]$ has measure zero. Hence $\Re A^2=0$ and
$A=\pm i\overline A$.

It remains to select one of the two phase branches. We use
Lemma~\ref{lem-HB}. From the integral representation
\begin{equation}\label{d3-Phi-inversion}
\Phi(z)=\frac1{4\pi}\int_{-1}^1 h(t)e^{izt/2}\,dt
\end{equation}
and \eqref{d3-dirichlet}, two integrations by parts as $y\to+\infty$ give
\begin{align*}
E(iy)&= -\frac{\overline A+iA}{\pi y^2}\,e^{y/2} +O(e^{y/2}y^{-3}),\\
E^\#(iy)&= -\frac{\overline A-iA}{\pi y^2}\,e^{y/2} +O(e^{y/2}y^{-3}).
\end{align*}
The branch $A=-i\overline A$ contradicts the inequality
$|E(z)|<|E^\#(z)|$, $\Im z>0$. Hence $A=i\overline A$, which gives
\eqref{d3-phase}.

\subsection{Seven-diagonal scheme}
Let $P_n^{(\alpha,\beta)}$ be the Jacobi polynomials \cite{Se74}.

\subsubsection{Basis and matrix operators}
Set
\[
e_n(x)=\frac{n+1}{2}\,(1-x^2)P_{n-1}^{(1,1)}(x), \quad n\in\mathbb N.
\]
These functions automatically satisfy \eqref{d3-dirichlet}. Write
\[
\nu_n=n(n+1),\quad \mu_n=(n-1)(n+2).
\]
The standard formulas for Jacobi polynomials give
\begin{align*}
\partial_x\bigl((1-x^2)P_{n-1}^{(1,1)}(x)\bigr)
&=-2nP_n^{(0,0)}(x),\\
\bigl((1-x^2)\,\partial_x^2-4x\,\partial_x\bigr)P_{n-1}^{(1,1)}
&=-\mu_nP_{n-1}^{(1,1)}.
\end{align*}
Hence,
\[
e_n'=-\nu_nP_n^{(0,0)},\quad
\bigl((1-x^2)\,\partial_x^4-4x\,\partial_x^3\bigr)e_n =-\mu_ne_n''.
\]
Also, with the convention $e_0=0$,
\begin{equation}\label{d3-x-recurrence}
xe_n=\frac{n}{2n+1}\,e_{n+1} +\frac{n+1}{2n+1}\,e_{n-1}.
\end{equation}

Consider the Hilbert space
\[
\mathcal X=L^2\Bigl((-1,1),\frac{dx}{1-x^2}\Bigr).
\]
The system $(e_n)_{n\ge1}$ is a complete orthogonal basis in this space.
Indeed, the orthogonality of the Jacobi polynomials $P_{n-1}^{(1,1)}$ with
weight $1-x^2$ gives
\[
\langle e_n,e_m\rangle_{\mathcal X}
=
\frac{2n(n+1)}{2n+1}\,\delta_{nm},
\]
and completeness follows from completeness of the Jacobi polynomials.

Since $h$ is entire and $h(\pm1)=0$, the function
\[
G(z)=\frac{h(z)}{1-z^2}
\]
is also entire. Therefore $h=(1-x^2)G\in\mathcal X$, and using the Hermitian
symmetry \eqref{d3-reality},
\begin{equation}\label{d3-h-expansion}
h(x)=\sum_{n\ge1}i^{\,n-1}y_ne_n(x), \quad y_n\in\mathbb R.
\end{equation}
Equivalently,
\[
G(x)=\sum_{n\ge1}\frac{n+1}{2}\,
i^{\,n-1}y_nP_{n-1}^{(1,1)}(x).
\]
Since $G$ is entire, \cite[Th.~9.1.1]{Se74} gives
\[
\lim_{n\to\infty}|y_n|^{1/n}=0.
\]
Consequently, for every $\rho>1$ there is $C_\rho>0$ such that
\begin{equation}\label{d3-coefficient-decay}
|y_n|\le C_\rho\rho^{-n},\quad n\ge1.
\end{equation}
Thus the series used below may be differentiated term by term as many times as
needed, and we may substitute $x=\pm1$ into them.

\begin{rem}
This choice of basis is naturally consistent with the one-dimensional case.
For $d=1$ one obtains the Jacobi polynomials $P_n^{(0,0)}$, while for $d=3$
one obtains the functions $(1-x^2)P_n^{(1,1)}$. This suggests the natural
basis
\[
(1-x^2)^mP_n^{(m,m)}(x),\quad d=2m+1,
\]
for general odd dimensions. We do not construct the corresponding
finite-dimensional spectral scheme for arbitrary $m$ here.
\end{rem}

Let $J=(-\partial_x^2)^{-1}$ be the inverse of $-\partial_x^2$ with Dirichlet
boundary conditions. Let $W$ be the matrix of $J$ in the basis $(e_n)$. Its
nonzero entries are
\[
\begin{cases}
W_{n,n}=\dfrac2{(2n-1)(2n+3)}, & n\ge1,\\[3mm]
W_{n+2,n}=-\dfrac{n(n+1)}
{(2n+1)(2n+3)(n+2)(n+3)}, & n\ge1,\\[3mm]
W_{n-2,n}=-\dfrac{n(n+1)}
{(2n-1)(2n+1)(n-2)(n-1)}, & n\ge3.
\end{cases}
\]
The matrix $U$ of the operator $J\,\partial_x(x\,\partial_x)$ is given by
\[
\begin{cases}
U_{n+1,n}=-\dfrac{n(n+1)}{(2n+1)(n+2)}, & n\ge1,\\[3mm]
U_{n-1,n}=-\dfrac{n(n+1)}{(2n+1)(n-1)}, & n\ge2.
\end{cases}
\]
Let $X$ be the matrix of multiplication by $x$, given by
\eqref{d3-x-recurrence}, and let $V=WX$.

\subsubsection{Spectral system}
Set
\[
S=\operatorname{diag}(1,i,i^2,\ldots)
\]
and introduce the real matrices
\[
\widetilde W=S^{-1}WS,\quad \widetilde U=iS^{-1}US,\quad \widetilde
V=-iS^{-1}VS.
\]
Since the coefficient vector of $h$ in the basis $(e_n)$ is $Sy$, applying
$J$ to \eqref{d3-fourier-ODE} and introducing the spectral parameter
\[
\lambda=-\frac{a_3c_1}{2}
\]
gives
\begin{equation}\label{d3-matrix-pencil}
\Bigl(D_\mu+\beta_2I+\Lambda_1\widetilde U +\frac{a_3}{4}\,\widetilde V\Bigr)y=
\lambda\widetilde Wy,\quad D_\mu=\operatorname{diag}(\mu_n).
\end{equation}
Thus we obtain a real seven-diagonal generalized spectral problem.

The boundary condition at $x=1$ becomes
\begin{equation}\label{d3-boundary-functional}
\sum_{n\ge1}\nu_n
\bigl(\nu_n-(2-\beta_2+i\Lambda_1)\bigr)
i^{\,n-1}y_n=0.
\end{equation}
The phase condition \eqref{d3-phase} is equivalent to
\begin{equation}\label{d3-phase-functional}
\sum_{n\ge1}(-1)^{\lfloor n/2\rfloor}\nu_ny_n=0.
\end{equation}
We fix the normalization of the eigenvector by
\[
y_1=3\pi,
\]
which is equivalent to $\Phi(0)=1$. Indeed,
\[
\int_{-1}^1e_1(x)\,dx=\frac43,\quad
\int_{-1}^1e_n(x)\,dx=0,\quad n\ge2,
\]
and therefore
\[
\Phi(0) =\frac1{4\pi}\int_{-1}^1h(x)\,dx =\frac{y_1}{3\pi}.
\]

Thus, \eqref{d3-matrix-pencil}, the real and imaginary parts of
\eqref{d3-boundary-functional}, condition \eqref{d3-phase-functional}, and
the normalization $y_1=3\pi$ form an infinite nonlinear system. For the
analysis below, we normalize it by dividing the $n$th equation in
\eqref{d3-matrix-pencil} by $1+\mu_n$. This does not change the set of
solutions. We write the resulting system as
\begin{equation}\label{d3-infinite-system}
\mathcal F(Z)=0,\quad Z=(y,a_3,\Lambda_1,\beta_2,\lambda).
\end{equation}

\subsubsection{Identification of the spectral branch}\label{subsubsec-d3-identification}
We show that the reality and simplicity condition for the zeros of the
reconstructed function allows us to identify a solution of the spectral
system with the extremal branch.

Let $Z=(y,a,\Lambda,\beta,\lambda)$, $a>0$, be a solution of the spectral
system \eqref{d3-infinite-system}. Let $h$ and $\Phi$ be reconstructed from
$y$ by \eqref{d3-h-expansion}, \eqref{d3-Phi-inversion}, and set
$\psi(z)=\Phi(z)\Phi(-z)$. Assume that all zeros of $\psi$ are real and simple.
We show that
\begin{equation}\label{psi-a}
\psi=\varphi_3,\quad a=a_3.
\end{equation}

Set $c=-2\lambda/a$. It follows from \eqref{d3-infinite-system} that
$\Phi$ satisfies \eqref{d3-ODE}. Applying this equation at $z$ and $-z$, we
obtain
\begin{equation}\label{d3-identification-FE}
z^4\bigl(\Phi'(z)\Phi(-z)+\Phi'(-z)\Phi(z)\bigr)
-2(a+\Lambda z^2)\psi(z)=-2a.
\end{equation}

Define $A$ by the right-hand side of \eqref{def-A} and set
\[
\Theta(z)=z^3A(z)-\Lambda z-\frac az.
\]
Then \eqref{d3-identification-FE} is equivalent to
\begin{equation}\label{d3-identification-Theta}
\psi(z)\Theta(z)=-\frac az.
\end{equation}

Let $b=h'(1)$. By \eqref{d3-reality}, \eqref{d3-phase}, and
\eqref{d3-Phi-asymptotic},
\begin{equation}\label{psi-asymp}
b=i\overline b,\quad
\psi(z)=\frac{2|b|^2}{\pi^2z^4}\,\cos z+
O\bigl(|z|^{-5}e^{|\!\Im z|}\bigr).
\end{equation}
Here $b\ne0$, since otherwise the boundary condition and
\eqref{d3-fourier-ODE} successively imply $h^{(k)}(1)=0$ for all $k$, which
contradicts the normalization. In particular,
$\psi(|\Cdot|)\in \PW_1^1(\mathbb R^3)$.

By assumption, the zeros of $\psi$ are real. By
\eqref{d3-identification-Theta}, \eqref{psi-asymp}, and the standard
logarithmic-derivative estimate for an entire function of exponential type
with real zeros, for $y>0$ we have
\[
|e^{-iz}\Theta(z)|
\le C(1+|z|)^N(1+y^{-M}),\quad z=x+iy,
\]
for some $C,N,M>0$. Hence $e^{-iz}\Theta(z)$ has a boundary value in
$\mathcal S'(\mathbb R)$, and the Paley--Wiener theorem for boundary values
(see \cite[Th.~7.4.3]{Ho90}) gives
\begin{equation}\label{d3-identification-gap}
\supp\widehat{\Theta^+}\subset[1,\infty).
\end{equation}

Let $0<t_1<t_2<\ldots$ be the positive zeros of $\psi$. By
\eqref{d3-identification-FE} and the alternating sign of~$\psi$,
\[
\res_{z=t_n}A(z)=\frac{(-1)^{n-1}}2.
\]
By the Mittag--Leffler theorem
(see, e.g., \cite[Ch.~5, Sec.~2]{Ah79}), we obtain an expansion of~$A$
in terms of its poles up to an entire term. The growth
estimates obtained above, after subtraction of the principal parts, show that
this entire term has polynomial growth and hence is a polynomial. Therefore, by \eqref{boundary-Fourier} and the fact that the Fourier
transform of a polynomial is supported at the origin, for $u>0$ we have
\[
\widehat{A^+}(u)=-2\pi S(u),\quad
S(u)=\sum_{n=1}^\infty(-1)^{n-1}\sin(t_nu),
\]
where the series is understood in the Abel sense. From
\eqref{d3-identification-gap} and \eqref{boundary-Fourier}, for $0<u<1$ we
obtain
\[
0=\widehat{\Theta^+}(u)=2\pi i\bigl(S'''(u)+a\bigr).
\]
Consequently,
\[
S(u)=\gamma u-\frac a6\,u^3,\quad |u|<1.
\]
Since
\[
j_{3/2}(x)=3\,\frac{\sin x-x\cos x}{x^3},
\]
we obtain
\[
\sum_{n=1}^\infty(-1)^{n-1}t_n^3j_{3/2}(t_nu)
=\frac3{u^3}\,\bigl(S(u)-uS'(u)\bigr)=a,\quad |u|<1.
\]
The spectral characterization of the zeros in
Subsection~\ref{subsec-zero-characterization} now gives \eqref{psi-a}.

\subsection{Finite truncations}
For the numerical solution, truncate the expansion of $h$ to the first $N$
functions $e_n$ and set
$y^{(N)}=(y_1,\ldots,y_N)$.
Let $\widetilde W^{(N)}$, $\widetilde U^{(N)}$, $\widetilde V^{(N)}$, and
$D_\mu^{(N)}$ denote the principal $N\times N$ sections of the corresponding
infinite matrices.

For given $a_3$, $\Lambda_1$, $\beta_2$, consider the generalized spectral
problem
\begin{equation}\label{d3-finite-pencil}
\Bigl(D_\mu^{(N)}+\beta_2I+\Lambda_1\widetilde U^{(N)}
+\frac{a_3}{4}\,\widetilde V^{(N)}\Bigr)y^{(N)}= \lambda\widetilde
W^{(N)}y^{(N)}.
\end{equation}

When forming the finite system, as in \eqref{d3-infinite-system}, we divide
the $n$th equation in \eqref{d3-finite-pencil} by $1+\mu_n$,
$n=1,\ldots,N$. Together with the finite analogues of the real and imaginary
parts of \eqref{d3-boundary-functional}, condition
\eqref{d3-phase-functional}, and the normalization $y_1=3\pi$, this gives a
system of $N+4$ nonlinear equations
\[
\mathcal F^{(N)}(Z^{(N)})=0,\quad
Z^{(N)}=(y^{(N)},a_3,\Lambda_1,\beta_2,\lambda).
\]
After the solution is found, the parameter $c_1$ is recovered from
\[
c_1=-\frac{2\lambda}{a_3}.
\]

\subsection{Numerical result}
The computations were carried out with $120$ decimal digits of precision. The
finite systems were solved by Newton's method. When the number $N$ of basis
functions was increased, the solution of the previous truncation was used as
the initial approximation. As $N$ increased, all digits displayed below
stabilized rapidly:
\[
\begin{array}{c|r@{}l}
\text{quantity}&
\multicolumn{2}{c}{\text{numerical value}}\\ \hline
a_3&
48&.73902791584226299739\ldots\\
\Lambda_1&
2&.67087119911095542542\ldots\\
\beta_2&
-0&.67468838994063442508\ldots\\
\Phi'(0)=c_1&
-0&.08623217034028118105\ldots\\
\mathcal C_3&
0&.00244908879850930291\ldots
\end{array}
\]
and, in particular, the normalized Nikolskii constant \eqref{def-Lstar} is
\[
\mathcal L^*(3)= 0.14502922550655639263\ldots.
\]
For comparison, \eqref{Da21-bounds} gives
\[
0.125\le\mathcal L^*(3)\le {}_1F_2\Bigl(\frac32;\frac52,\frac52;
-\frac{\beta_3^2}{4}\Bigr) =0.261\ldots.
\]

\subsection{Validation of the numerical scheme}
For a rigorous justification of the numerical result, we first verify the
existence and nondegeneracy of the computed solution of the spectral system,
and then the reality and simplicity of the zeros of the corresponding
function.

\subsubsection{Certification of the spectral solution}
Set
\[
\ell^\infty_2=
\Bigl\{y=(y_n)\colon \|y\|_2=\sup_{n\ge1}2^n|y_n|<\infty\Bigr\}.
\]
By \eqref{d3-coefficient-decay}, the exact solution belongs to
$\ell^\infty_2$.

After dividing the $n$th equation by $1+\mu_n$, all matrix operators define
bounded operators on $\ell^\infty_2$, while the boundary functionals are
continuous because their coefficients have only polynomial growth. Therefore,
system \eqref{d3-infinite-system} defines an analytic mapping
\[
\mathcal F\colon
\ell^\infty_2\times\mathbb R^4\to
\ell^\infty_2\times\mathbb R^4.
\]

To check the nondegeneracy of the spectral branch found numerically, we used
interval arithmetic with the norm
\[
\|(y,a_3,\Lambda_1,\beta_2,\lambda)\|_*=
\max\Bigl\{\sup_{n\ge1}2^n|y_n|,
\frac{|a_3|}{50},\frac{|\Lambda_1|}{3},
|\beta_2|,\frac{|\lambda|}{3}\Bigr\}.
\]
Let $\widetilde Z$ be the numerical solution of the finite system for $N=40$,
extended by zeros in the coordinates $y_n$, $n>40$. As an approximate inverse
to $\nabla\mathcal F(\widetilde Z)$, we used the operator
\[
\widetilde A=
\begin{pmatrix}
D^{-1}&0\\
0&\displaystyle
\operatorname{diag}_{n>40}\bigl((1+\mu_n)/\mu_n\bigr)
\end{pmatrix},
\]
where $D$ is the $44\times44$ Jacobian matrix of the finite system for $N=40$.
For
\[
T(Z)=Z-\widetilde A\mathcal F(Z)
\]
interval arithmetic on the ball $B_{10^{-48}}(\widetilde Z)$ gave
\[
\|T(\widetilde Z)-\widetilde Z\|_*<4.907\cdot10^{-53},\quad
\sup_{B_{10^{-48}}(\widetilde Z)}\|\nabla T\|_*<
3.764\cdot10^{-3}.
\]
Therefore, this ball contains a unique solution $Z_*$ of the system
$\mathcal F(Z)=0$, and the operator $\nabla\mathcal F(Z_*)$ is invertible.

\subsubsection{Verification of the zeros}
It remains to verify the reality and simplicity condition from
Subsection~\ref{subsubsec-d3-identification}. Reconstruct from $Z_*$ the
functions $h$, $\Phi$, and $\psi(z)=\Phi(z)\Phi(-z)$. Interval arithmetic
together with \eqref{d3-h-expansion} gives, for $b=h'(1)$,
\[
21.932<|b|<21.933.
\]
The phase condition gives $h'(-1)=ib$, and three integrations by parts in
\eqref{d3-Phi-inversion} give
\[
\Phi(z)=
\frac{b}{\pi z^2}\,\bigl(e^{iz/2}-ie^{-iz/2}\bigr)+R(z),
\quad
|R(z)|<
247.688\,\frac{e^{|\!\Im z|/2}}{|z|^3}.
\]
Interval arithmetic also gives
\[
\rho=\frac{2|b|^2}{\pi^2}>97.478.
\]
Then
\begin{equation}\label{d3-certified-cos}
\left|z^4\psi(z)-\rho \cos z\right|<
\frac{7120}{|z|}\,e^{|\!\Im z|},
\quad |z|\ge100\pi.
\end{equation}

For the finite part, interval arithmetic gives
\begin{equation}\label{d3-certified-signs}
\min_{2\le k\le100}
(-1)^k(k\pi)^4\psi(k\pi)>73.503.
\end{equation}
Hence $\psi$ has at least one zero in each interval
$(k\pi,(k+1)\pi)$, $2\le k<100$.

Let now $m\ge100$ and $|z|=m\pi$. On this circle,
\[
|\!\cos z|\ge\frac14\,e^{|\!\Im z|},
\]
which follows from
$|\!\cos(x+iy)|^2=\cos^2x+\sinh^2y$ and
$x^2+y^2=m^2\pi^2$. Since
\[
\frac{7120}{100\pi}<\frac{97.478}{4},
\]
\eqref{d3-certified-cos} and Rouch\'e's theorem show that
$z^4\psi(z)$ and $\rho \cos z$ have the same number of zeros in the disk
$|z|<m\pi$. The function $\cos z$ has $2m$ zeros in this disk, and
$\psi(0)=1$, so $\psi$ has exactly $2m-4$ zeros, counting multiplicities.

On the other hand, \eqref{d3-certified-cos} gives, for $k\ge100$,
\[
(-1)^k(k\pi)^4\psi(k\pi)>
97.478-\frac{7120}{100\pi}>0.
\]
Together with \eqref{d3-certified-signs}, this gives at least one positive
zero in each interval $(k\pi,(k+1)\pi)$, $k\ge2$. By the evenness of $\psi$,
there are at least $2m-4$ distinct real zeros in the disk $|z|<m\pi$.
Comparison with the count given by Rouch\'e's theorem shows that these are all
the zeros and that each of them is simple. Hence all zeros of $\psi$ are real
and simple.

By Subsection~\ref{subsubsec-d3-identification},
$\psi=\varphi_3$, $a=a_3$.
Thus the numerically found spectral branch is rigorously certified,
nondegenerate, and identified with the extremal branch.

\section{Final remarks}\label{sec-final}

This section collects several consequences of the main identities that are not
used in the proof of Theorem~\ref{thm-main}, together with some additional
interpretations of the formulas obtained above.

\subsection{An equation for the extremal function $\varphi$}

From \eqref{q-before-degree},
\[
q_d(z)=\frac{K_d(z)}{z^{d+1}}
-\frac{R_d(z)^2}{z^{2d+2}},
\]
so $q_d$ is an even Laurent polynomial.

Since $h_d$ is odd,
$\varphi(z)=\widetilde{\Phi}(z)\widetilde{\Phi}(-z)$. If $y_1$, $y_2$ satisfy
\[
y''+py'+qy=0,
\]
then, by direct differentiation, for $u=y_1y_2$ we obtain
\[
u'''+3pu''+(p'+4q+2p^2)u'
+(2q'+4pq)u=0.
\]
Applying this formula to \eqref{gauge-ODE} and the functions
$\widetilde{\Phi}(z)$, $\widetilde{\Phi}(-z)$, we get
\[
\varphi'''+\frac{3(d+1)}z\,\varphi''+
\Bigl(4q_d+\frac{(d+1)(2d+1)}{z^2}\Bigr)\varphi'+
\Bigl(2q_d'+\frac{4(d+1)}z\,q_d\Bigr)\varphi=0.
\]

\subsection{Finite part and zeta interpretation}\label{sec-fp-zeta}
Recall that in \eqref{Lambda-def} the coefficients $\Lambda_{2n+1}$ were
defined through the coefficients of the polynomial $P_d$, which represents
$2\widehat\sigma$ on the interval $(-1,1)$. We now show that
$\Lambda_{2n+1}$ also have an interpretation in terms of the zeros $\tau_k$ of
the extremal function $\varphi$: through the finite part of a meromorphic
series and through the zeta function of the zero sequence.

\subsubsection{Finite part}
Since $\Pi_d\equiv0$, \eqref{Pi-at-zero} gives
\[
\res_{z=\tau_k}\frac1{\varphi(z)}=
\frac1{\varphi'(\tau_k)} =(-1)^kV_d\mathcal C_d\,\tau_k^{d+1}.
\]
Define the finite part of the alternating series
\[
\sum_{k=1}^\infty(-1)^{k-1}\,\frac{\tau_k^d}{z^2-\tau_k^2}
\]
by
\begin{equation}\label{canonical-FP}
\operatorname{f.p.}\sum_{k=1}^\infty(-1)^{k-1}\,\frac{\tau_k^d}{z^2-\tau_k^2}=
z^{d-1}A(z)-Q_d(z).
\end{equation}
Then \eqref{phi-Theta} becomes
\[
\frac1{\varphi(z)}= 1-2V_d\mathcal C_d z^2
\operatorname{f.p.}\sum_{k=1}^\infty(-1)^{k-1}\,\frac{\tau_k^d}{z^2-\tau_k^2}.
\]
Definition \eqref{canonical-FP} does not require any additional summation
method: it is given by the already constructed functions $A$ and $Q_d$ and
does not assume convergence of the series
\[
\sum_{k\ge1}(-1)^{k-1}\tau_k^{2n+1}.
\]

\subsubsection{Zeta interpretation}
From the proof of the spectral gap (see Subsection~\ref{subsec-Theta-d}), for the upper
boundary value $A^+$ we have
\[
\widehat{A^+}(t)= -\frac{\pi}{4}\sum_{n=0}^m p_nt_+^{2n+1}+T(t), \quad \supp
T\subset[1,\infty),
\]
where $T$ is a distribution of finite order. By the Fourier--Laplace formula,
\[
A(iy)=\frac1{2\pi}\,\bigl\langle\widehat{A^+}(t),e^{-yt}\bigr\rangle,\quad
y>0.
\]
Therefore, as $y\to+\infty$,
\[
A(iy)= -\frac18\sum_{n=0}^m p_n(2n+1)!\,y^{-2n-2} +O(y^Me^{-y})
\]
for some $M>0$. By the definition of $\Lambda_{2n+1}$, this is equivalent to
\begin{equation}\label{A-asymptotic}
A(iy)= \sum_{n=0}^m \frac{\Lambda_{2n+1}}{(iy)^{2n+2}} +O(y^Me^{-y}).
\end{equation}
In particular, after subtracting the displayed terms, the remainder is
$O(y^{-N})$ for every~$N>0$.

For $\Re s>0$, set
\[
\eta_\tau(s)= \sum_{k=1}^\infty\frac{(-1)^{k-1}}{\tau_k^s}.
\]
Since $\tau_k\sim\pi k$, the series converges conditionally for
$0<\Re s\le1$ and absolutely for $\Re s>1$. Summation by parts also gives
uniform convergence on compact subsets of the half-plane $\Re s>0$, so
$\eta_\tau$ is holomorphic there.

In the strip $0<\Re w<1$,
\begin{equation}\label{eta-Mellin}
\eta_\tau(w)= -\frac2\pi\,\cos\frac{\pi w}{2} \int_0^\infty A(iy)y^{-w}\,dy.
\end{equation}
Indeed,
\[
A(iy)= -\sum_{k=1}^\infty(-1)^{k-1}\,\frac{\tau_k}{y^2+\tau_k^2},\quad
\int_0^\infty \frac{\tau}{y^2+\tau^2}\,y^{-w}\,dy= \frac{\pi}{2\cos(\pi
w/2)}\,\tau^{-w}.
\]
Termwise integration is justified by the estimate, uniform in $N$,
\[
\biggl|\sum_{k=1}^N(-1)^{k-1}\,\frac{\tau_k}{y^2+\tau_k^2}\biggr| \le
C\min(1,y^{-1}),
\]
which follows by summation by parts. For $0<\Re w<1$, the corresponding
majorant is integrable on $(0,\infty)$, so we may pass to the limit under the
integral sign.

The asymptotic formula \eqref{A-asymptotic} gives a meromorphic continuation of
the integral on the right-hand side of \eqref{eta-Mellin} to the left, and
therefore an analytic continuation of $\eta_\tau$. Indeed, successively
subtracting the asymptotic terms $\Lambda_{2n+1}/(iy)^{2n+2}$ at infinity
continues the right-hand side of \eqref{eta-Mellin} across the points
$w=-(2n+1)$. At each such point, the corresponding simple pole of the integral
is canceled by the zero of $\cos(\pi w/2)$, and
\[
\eta_\tau(-(2n+1))= \Lambda_{2n+1},\quad 0\le n\le m,
\]
where the left-hand side is understood by analytic continuation of the
function $\eta_\tau$. In particular,
\[
\eta_\tau(-d)=\Lambda_d=a_d.
\]

For $d=1$ there is only one coefficient,
$\Lambda_1=\eta_\tau(-1)$. A related interpretation in terms of the zeros of
the extremal function is given in \cite{Bo25}.

Thus the coefficients $\Lambda_{2n+1}$ give regularized values of the
alternating power sums
\[
\sum_{k\ge1}(-1)^{k-1}\tau_k^{2n+1}, \quad 0\le n\le m.
\]

\subsection{Even dimensions}
An essential part of the proof uses the oddness of the dimension. If
$d=2m+1$, then $|x|^{d-1}=x^{2m}$, and after the Fourier transform
multiplication by the weight $|x|^{d-1}$ becomes an ordinary derivative of
finite order. This is why the restriction of $\widehat\sigma$ to $(-1,1)$ is
a polynomial and can be canceled by the polynomial $Q_d$.

For even $d=2m$, the function $|x|^{d-1}=|x|^{2m-1}$ is no longer a
polynomial. On the Fourier side one obtains the nonlocal operator
\[
(-\partial_t^2)^{m-1/2}= (-1)^{m-1}\mathcal H\,\partial_t^{2m-1},
\]
where $\mathcal H$ is the Hilbert transform. Therefore, the restriction of
$\widehat\sigma$ to $(-1,1)$ need not be a polynomial.

A natural continuation of the present approach is to split the Hilbert
transform into the contribution from $(-1,1)$ and the exterior contribution,
and to study the resulting singular integral equation. Inversion formulas for
the finite Hilbert transform suggest looking for an analogue of $Q_d$ among
functions with algebraic singularities at $\pm1$, in particular with factors
$(1-t^2)^{\pm1/2}$. It remains to understand whether such a replacement leads
to a finite-dimensional spectral problem analogous to the odd-dimensional
case.

\subsection*{Acknowledgments}
The author is grateful to the AI model used in the preparation of this paper
for its assistance.


\begin{thebibliography}{99}

\bibitem{Ah79}
L.~V.~Ahlfors, \textit{Complex analysis},
3rd ed., McGraw--Hill, New York, 1979.

\bibitem{Bo25}
A.~Bondarenko, J.~Ortega-Cerd\`a, D.~Radchenko, K.~Seip, \textit{The
H\"ormander--Bernhardsson extremal function}, arXiv:2504.05205v2.

\bibitem{CL55}
E.~A.~Coddington, N.~Levinson, \textit{Theory of ordinary differential
equations}, McGraw--Hill, New York, 1955.

\bibitem{Da21}
F.~Dai, D.~Gorbachev, S.~Tikhonov, \textit{Estimates of the asymptotic
Nikolskii constants for spherical polynomials}, J.~Complexity \textbf{65}
(2021), 101553.

\bibitem{Ge64}
I.~M.~Gel'fand and G.~E.~Shilov, \textit{Generalized functions. Vol.~1:
Properties and operations}, Academic Press, New York--London, 1964.

\bibitem{Ge38}
J.~Geronimus, \textit{Sur un probl\`eme extr\'emal de Tchebycheff}, Izv. Akad.
Nauk SSSR Ser. Mat. \textbf{2}:4 (1938), 445--456.

\bibitem{Gu88}
G.~G.~Gundersen, \textit{Estimates for the logarithmic derivative of a
meromorphic function, plus similar estimates}, J. London Math. Soc. (2)
\textbf{37} (1988), 88--104.

\bibitem{HLP52}
G.~H.~Hardy, J.~E.~Littlewood, G.~P\'olya, \textit{Inequalities}, 2nd ed.,
Cambridge University Press, Cambridge, 1952.

\bibitem{Ho90}
L.~H\"ormander, \textit{The Analysis of Linear Partial Differential Operators
I: Distribution Theory and Fourier Analysis}, 2nd ed., Springer-Verlag, Berlin,
1990.

\bibitem{Ko98}
P.~Koosis, \textit{Introduction to $H^p$ Spaces}, 2nd ed., Cambridge Tracts in
Mathematics, Vol.~115, Cambridge University Press, Cambridge, 1998.

\bibitem{Le80}
B.~Ya.~Levin, \textit{Distribution of zeros of entire functions}, Providence,
RI, Amer. Math. Soc., 1980.

\bibitem{Ni75}
S.~M.~Nikolskii, \textit{Approximation of functions of several variables and
imbedding theorems}, Berlin; Heidelberg; New York: Springer, 1975.

\bibitem{Pe07}
I.~Pesenson, \textit{Plancherel--Polya-type inequalities for entire functions
of exponential type in $L^p(\mathbb R^d)$}, J. Math. Anal. Appl. \textbf{330}
(2007), 1194--1206.

\bibitem{Sh71}
H.~Shapiro, \textit{Topics in approximation theory}, Lecture Notes in
Mathematics, V.~187, Springer-Verlag, Berlin, Heidelberg, 1971.

\bibitem{Su25}
M.~Suzuki, \textit{Chains of reproducing kernel Hilbert spaces generated by unimodular
functions}, Ann. Inst. Fourier \textbf{75} (2025), no.~4, 1463--1508.

\bibitem{Se74}
G.~Szeg\"o, \textit{Orthogonal polynomials}, 3rd ed., American Mathematical
Society, 1974.

\bibitem{Ze05}
A.~Zettl, \textit{Sturm--Liouville theory}, Mathematical Surveys and
Monographs, Vol.~121, American Mathematical Society, Providence, RI, 2005.

\end{thebibliography}
\end{document}